\let\cal=\mathcal
\newtheorem{theo}{Theorem}[section]
\newtheorem{problem}{Problem}
\newtheorem{lemma}[theo]{Lemma}
\newtheorem{coro}[theo]{Corollary}
\newtheorem{con}[theo]{Conjecture}
\newcommand*{\qed}{\hfill\ensuremath{\square}}
\begin{document}
\title{The Erd\H{o}s--Ko--Rado Theorem in $\ell_2$-Norm\thanks{{E-mail:  $^1$wu@hunnu.edu.cn (B. Wu), $^2$huajunzhang@usx.edu.cn (H. Zhang, Corresponding author)}}}
\date{}
\author{Biao Wu$^1$, Huajun Zhang$^2$\\[10pt]
$^{1}$MOE-LCSM, School of Mathematics and Statistics,
Hunan Normal University\\
Changsha 410081, P. R. China\\[6pt]
$^{2}$Department of Mathematics,
Shaoxing University\\
Shaoxing 312000, P. R. China\\[6pt]
}

\maketitle

\begin{abstract}
The codegree squared sum ${\rm co}_2(\cal F)$ of a family (hypergraph) $\cal F \subseteq \binom{[n]} k$ is defined to be the sum of codegrees squared $d(E)^2$ over all $E\in \binom{[n]}{k-1}$, where $d(E)=|\{F\in \cal F: E\subseteq F\}|$.
Given a family of $k$-uniform hypergraphs $\mathscr H$, Balogh, Clemen and Lidick\'y recently introduced the problem to determine the maximum codegree squared sum ${\rm co}_2(\cal F)$ over all $\mathscr H$-free $\cal F$.
In the present paper, we consider the families which have as forbidden configurations all pairs of sets with intersection sizes less than $t$,
 that is, the well-known $t$-intersecting families. We prove the following Erd\H{o}s--Ko--Rado Theorem in $\ell_2$-norm, which confirms a conjecture of Brooks and Linz.

Let $t,k,n$ be positive integers such that $t\leq k\leq n$. If a family $\mathcal F\subseteq \binom{[n]}{k}$ is $t$-intersecting, then for $n\ge (t+1)(k-t+1)$, we have
\[{\rm co}_2(\cal F)\le {\binom{n-t}{k-t}}(t+(n-k+1)(k-t)),\]
equality holds if and only if $\mathcal{F}=\{F\in {\binom{[n]}{k}}: T\subseteq F\}$ for some $t$-subset $T$ of $[n]$.

In addition, we prove a Frankl--Hilton--Milner Theorem  in $\ell_2$-norm for $t\ge 2$, and a generalized Tur\'an result, i.e., we determine the maximum number of copies of tight path of length 2 in $t$-intersecting families.
\end{abstract}

Key Words: Erd\H{o}s--Ko--Rado Theorem, Codegree squared sum, $t$-intersecting.

\section{Introduction}

Determining the Tur\'an numbers of $k$-uniform hypergraphs is a central problem in extremal combinatorics. This question has been extensively studied for non-bipartite graphs, but it remains notoriously difficult for hypergraphs. To gain a deeper understanding of Tur\'an problems, numerous extensions have been proposed.
Recently, Balogh, Clemen, and Lidicky introduced a new type of extremality for hypergraphs. We now introduce this concept.

For integers $m$ and $n$ ($m\le n$), denote $[m,n]=\{m,m+1,\cdots,n\}$ and $[n]=[1,n]$.
Let $\mathcal{F}$ be a $k$-uniform hypergraph on $[n]$. For a set $E\subseteq[n]$, the codegree of $E$, denoted by $d(E)$, is the number of edges in $\mathcal{F}$ that contain $E$. The codegree vector is the vector $\textbf{x}\in\mathbb{Z}^{\binom{[n]}{k-1}}$ whose entries are given by $\mathbf{x}_E=d(E)$ for all $(k-1)$-subsets $E\subseteq[n]$. The codegree squared sum ${\rm co}_2(\mathcal{F})$ is defined as the square of the $\ell_2$-norm of the codegree vector of $\mathcal{F}$, i.e.,
\[
{\rm co}_2(\mathcal{F})=\sum_{E\in\binom{[n]}{k-1}}d(E)^2.
\]

For a $k$-uniform hypergraph $\mathcal{F}$, observe that
$
\sum_{E\in\binom{[n]}{k-1}}d(E)=k|\mathcal{F}|,
$
which implies that the $\ell_1$-norm of the codegree vector is directly proportional to the classical extremal number, with the proportionality constant being $k$.
The problem of maximizing the codegree squared sum over all $k$-uniform hypergraphs with a fixed number of edges has been explored by Alon \cite{Alon} for $k=2$ and by Gruslys, Letzter, and Morrison \cite{GLM} for $k\geq 3$. Given a family of $k$-uniform hypergraphs $\mathscr{H}$, a $k$-uniform hypergraph $\mathcal{F}$ is said to be $\mathscr{H}$-free if it does not contain any copy of a member of $\mathscr{H}$. Recently, Balogh, Clemen, and Lidicky \cite{BCL,BCL2} introduced the following problem.

\begin{problem}[Balogh--Clemen--Lidick\'y, \cite{BCL}]
Given a family of $k$-uniform hypergraphs $\mathscr H$, what is the maximum $\ell_2$-norm
of the codegree vector of a $k$-uniform $\mathscr H$-free $n$-vertex hypergraph $\cal F$?
 \end{problem}

Balogh, Clemen and Lidick\'y \cite{BCL,BCL2} solved Problem 1 asymptotically for a number of 3-uniform hypergraphs; the exact results were obtained in several subsequent works.
One particularly important case involves the hypergraph $\mathscr H=\{\{F,G\}:F,G\in \binom{[n]}k, |F\cap G|\in [0,t-1]\}$ for some given integer $t$.
 A family $\mathcal{F}$  is called  {\it $t$-intersecting } if $|A\cap B|\ge t$ for all $A,B\in \mathcal{F}$.
Consequently, $\cal F$ is $\mathscr H$-free if and only if $\cal F$ is $t$-intersecting.
 The Erd\H{o}s-Ko-Rado Theorem stands as one of the cornerstone results in extremal combinatorics.

\begin{theo}[Erd\H{o}s--Ko--Rado, \cite{EKR1961}]\label{EKR1961}
Let $t,k,n$ be positive integers such that $t \leq k\leq n$. If $ \mathcal F \subseteq \binom{[n]}{k} $ is $t$-intersecting,  then for $n\geq n_0(k,t)$, \[|\mathcal{F}|\leq \binom{n-t}{k-t}. \]
\end{theo}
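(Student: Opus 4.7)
\noindent\textbf{Proof plan for Theorem \ref{EKR1961}.} My plan is to use Frankl's shifting (compression) technique, the standard approach when $n$ is large compared to $k$ and $t$. For each pair $1 \le i < j \le n$ define the shift $S_{ij}$: it replaces $F \in \mathcal F$ with $j \in F$, $i \notin F$ by $(F\setminus\{j\})\cup\{i\}$ whenever the latter is not already in $\mathcal F$. A routine verification gives $|S_{ij}(\mathcal F)| = |\mathcal F|$ and preserves the $t$-intersection property, so iterating shifts until $\mathcal F$ stabilizes allows the assumption that $\mathcal F$ is \emph{left-shifted}: whenever $F \in \mathcal F$ contains $j$ but not $i < j$, the swapped set $(F\setminus\{j\})\cup\{i\}$ also lies in $\mathcal F$.

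The core step is to show that for $n \ge n_0(k,t)$, a shifted $t$-intersecting $\mathcal F$ is contained in the star $\mathcal S := \{F \in \binom{[n]}{k}: [t] \subseteq F\}$ of size $\binom{n-t}{k-t}$. The key ingredient is an \emph{intersection-boost lemma}: in a shifted $t$-intersecting family, for each $s \ge 0$ the sub-family $\mathcal F_{\overline{[s]}} := \{F \in \mathcal F : F \cap [s] = \emptyset\}$ is $(t+s)$-intersecting. I argue by induction on $s$: if $A, B \in \mathcal F_{\overline{[s]}}$ had $|A \cap B| = t+s-1$, pick any $j \in A \cap B$ (necessarily $j > s$); shiftedness forces $A' = (A\setminus\{j\})\cup\{s\} \in \mathcal F$, which lies in $\mathcal F_{\overline{[s-1]}}$ and is $(t+s-1)$-intersecting with $B$ by induction---yet directly $|A' \cap B| = (t+s-1)-1$, a contradiction.

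Leveraging the boost, I would show that any shifted $t$-intersecting $\mathcal F$ with $|\mathcal F| > \binom{n-t}{k-t}$ is forced, for large $n$, to lie inside one of the Frankl $t$-intersecting families, each of which (other than the star itself) has size $O(n^{k-t-1})$, strictly below the star bound once $n \ge n_0(k,t)$. Once $\mathcal F \subseteq \mathcal S$ the conclusion $|\mathcal F| \le \binom{n-t}{k-t}$ is immediate by counting completions of $[t]$ in $[t+1,n]$.

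The main obstacle is the quantitative bookkeeping needed to pin down a clean threshold $n_0(k,t)$: verifying that every non-star Frankl family contributes at most $O(n^{k-t-1})$ edges, which is negligible against $\binom{n-t}{k-t} = \Theta(n^{k-t})$. A cleaner alternative is an induction on $t$ that splits $\mathcal F$ by membership of the element $1$; the intersection-boost lemma shows the $1$-free piece is $(t+1)$-intersecting and so is bounded by $\binom{n-t-2}{k-t-1}$ (via the induction hypothesis at elevated parameter), while the piece containing $1$ corresponds to a $(t-1)$-intersecting $(k-1)$-uniform family on $[2,n]$, bounded by the induction hypothesis at reduced parameter. Combining these bounds and tuning $n_0(k,t)$ so the $1$-free contribution is absorbed yields the theorem.
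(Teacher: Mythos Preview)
The paper does not contain a proof of Theorem~\ref{EKR1961}: it is quoted as a classical background result, attributed to Erd\H{o}s--Ko--Rado~\cite{EKR1961}, with the sharp threshold $N_0(k,t)=(t+1)(k-t+1)$ credited to Frankl~\cite{Fra1976} and Wilson~\cite{Wil1984} in the surrounding text. The paper's own work begins with Theorem~\ref{maintheo1} (the $\ell_2$-norm analogue), whose proof uses shifting together with the Ahlswede--Khachatrian generating-set machinery rather than re-deriving the classical size bound. So there is no ``paper's own proof'' of this statement for your proposal to be compared against.

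As for the proposal itself, the shifting set-up and the intersection-boost lemma (that $\{F\in\mathcal F:F\cap[s]=\emptyset\}$ is $(t+s)$-intersecting when $\mathcal F$ is shifted and $t$-intersecting) are correct and standard; your inductive proof of that lemma is clean. The weak point is everything after it. The sentence ``forced, for large $n$, to lie inside one of the Frankl $t$-intersecting families'' is not an argument; it is the conclusion you need to establish, and the boost lemma by itself does not give it. Likewise, the ``cleaner alternative'' by splitting on the element~$1$ is not an induction on $t$ in any straightforward sense: the $1$-free branch raises $t$ while keeping $k$ fixed, and the $1$-containing branch lowers both $k$ and $t$, so you would need a well-founded double induction together with a verification that the threshold hypothesis $n\ge n_0(k,t)$ propagates to both subproblems --- the arithmetic you defer under ``tuning $n_0(k,t)$'' is exactly where the content lies. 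What you have is a reasonable outline of Frankl's shifting proof, but the quantitative part that actually pins down any explicit $n_0(k,t)$ is missing.
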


Let $k,t$ be fixed positive integers such that $t\le k$, and let $N_0(k,t)$ be the smallest possible value of $n_0(k,t)$ in Theorem \ref{EKR1961}. In the case $t=1$, it is shown in \cite{EKR1961} that $N_0(k,1)=2k$. For $t\ge 1$, we have $N_0(k,t)=(t+1)(k-t+1)$, which was established in \cite{Fra1976} for $t\ge 15$ and in \cite{Wil1984} for all $t$. The Erd\H{o}s--Ko--Rado Theorem  has numerous versions and represents a significant research direction in extremal combinatorics. Therefore, it is intriguing to explore  an Erd\H{o}s--Ko--Rado--type theorem in the context of the $\ell_2$-norm. To achieve this, one promising approach involves leveraging an important inequality on the codegree squared sum of hypergraphs, established by Bey \cite{Bey}. This inequality generalizes a result for graphs originally due to de Caen \cite{Caen}.

\begin{theo}[Bey, \cite{Bey}]\label{Bey}
Let $\ell,k,n$ be integers with $0\le \ell \le k \le n$, $\cal F \subseteq \binom{[n]}k$ be a $k$-uniform hypergraph. Then,
\[
\sum_{E\in \binom{[n]}{\ell}}d(E)^2 \le \frac{\binom {k} {\ell}\binom {k-1} {\ell}}{\binom {n-1} {\ell}}|\cal F|^2+\binom {k-1} {\ell-1}\binom {n-\ell-1} {k-\ell}|\cal F|.
\]
\end{theo}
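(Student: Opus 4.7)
The plan is to decompose ${\rm co}_2(\mathcal{F})$ into two more tractable quantities. From the double count
\[
{\rm co}_2(\mathcal{F})=\sum_{F_1,F_2\in\mathcal{F}}\binom{|F_1\cap F_2|}{k-1},
\]
together with the facts that the summand equals $k$ when $F_1=F_2$, equals $1$ when $F_1\ne F_2$ and $|F_1\cap F_2|=k-1$, and equals $0$ otherwise, one obtains the identity
\[
{\rm co}_2(\mathcal{F})=k|\mathcal{F}|+\mathcal{P}_2(\mathcal{F}),
\]
where $\mathcal{P}_2(\mathcal{F})$ counts ordered pairs $(F,F')\in\mathcal{F}^2$ with $F\ne F'$ and $|F\cap F'|=k-1$ (the number of tight $2$-paths in $\mathcal{F}$).

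For the star $\mathcal{F}_T=\{F\in\binom{[n]}{k}:T\subset F\}$ with $|T|=t$, a direct count gives $|\mathcal{F}_T|=\binom{n-t}{k-t}$, and each $F\in\mathcal{F}_T$ has exactly $(k-t)(n-k)$ shadow-neighbours (obtained by exchanging one element of $F\setminus T$ for one of $[n]\setminus F$, keeping $T$), so $\mathcal{P}_2(\mathcal{F}_T)=(k-t)(n-k)\binom{n-t}{k-t}$. Substituting back into the identity yields ${\rm co}_2(\mathcal{F}_T)=\binom{n-t}{k-t}(t+(k-t)(n-k+1))$, which matches the theorem's value.

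It therefore suffices to prove the two inequalities
\[
|\mathcal{F}|\le\binom{n-t}{k-t}\quad\text{and}\quad \mathcal{P}_2(\mathcal{F})\le(k-t)(n-k)\binom{n-t}{k-t}
\]
for every $t$-intersecting $\mathcal{F}\subseteq\binom{[n]}{k}$ with $n\ge(t+1)(k-t+1)$, with the star as the common extremizer. The first inequality is Theorem~\ref{EKR1961}. The second is a generalized Tur\'an-type bound on tight $2$-paths in $t$-intersecting families, which is the paper's separate announced contribution; my strategy for proving it is Frankl-type shifting. The $(i,j)$-shift with $i<j$ preserves the $t$-intersecting property by a classical argument, and I would argue via a case analysis (splitting on whether each of $F_1,F_2$ in a pair at intersection $k-1$ is moved by the shift) that it produces an injection from $\mathcal{P}_2(\mathcal{F})$ into $\mathcal{P}_2(S_{ij}(\mathcal{F}))$, so $\mathcal{P}_2$ does not decrease. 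Once $\mathcal{F}$ is left-compressed, one of two cases holds: either $\mathcal{F}\subseteq\mathcal{F}_{[t]}$, and monotonicity of $\mathcal{P}_2$ under inclusion plus the star-calculation close the argument, or some $F\in\mathcal{F}$ misses an element of $[t]$, and a Frankl-type extremal analysis for shifted $t$-intersecting families, making full use of $n\ge(t+1)(k-t+1)$, yields a strict inequality. Equality in ${\rm co}_2$ forces equality in both sub-bounds, so the EKR equality case pins $\mathcal{F}=\mathcal{F}_T$.

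The main technical obstacle is the monotonicity of $\mathcal{P}_2$ under shifting together with the non-star case in the left-compressed analysis. The former requires a careful pair-by-pair injection because $\mathcal{P}_2$ records second-order interactions between edges (unlike cardinality, whose invariance under shifting is trivial), so losses in one subcase must be matched against gains elsewhere. The latter is where the hypothesis $n\ge(t+1)(k-t+1)$ is genuinely needed and where the Frankl-style inductive machinery for shifted $t$-intersecting families enters the argument.
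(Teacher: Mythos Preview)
Your proposal does not address the stated theorem. The statement you were asked to prove is Bey's inequality: for an \emph{arbitrary} $k$-uniform family $\mathcal{F}\subseteq\binom{[n]}{k}$ and any $0\le\ell\le k$,
\[
\sum_{E\in\binom{[n]}{\ell}}d(E)^2\le\frac{\binom{k}{\ell}\binom{k-1}{\ell}}{\binom{n-1}{\ell}}|\mathcal{F}|^2+\binom{k-1}{\ell-1}\binom{n-\ell-1}{k-\ell}|\mathcal{F}|.
\]
There is no $t$-intersecting hypothesis, no parameter $t$, and the bound is in terms of $|\mathcal{F}|$, not in terms of $\binom{n-t}{k-t}$. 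What you have written is instead a proof outline for Theorem~\ref{maintheo1} (the Erd\H{o}s--Ko--Rado theorem in $\ell_2$-norm). Nothing in your proposal---the decomposition ${\rm co}_2(\mathcal{F})=k|\mathcal{F}|+\mathcal{P}_2(\mathcal{F})$, the star computation, the shifting argument, the appeal to $n\ge(t+1)(k-t+1)$---bears on Bey's inequality, which holds for all $\mathcal{F}$ regardless of intersection structure and for all $\ell$, not just $\ell=k-1$.

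Moreover, the paper does not prove this theorem at all: it is quoted from Bey~\cite{Bey} as background and is not among the paper's contributions. So there is no ``paper's own proof'' to compare against; the correct response would have been either to reproduce Bey's original double-counting/Cauchy--Schwarz argument or to note that the result is cited without proof.
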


Bey also noted \cite{Bey2} the relation between Theorem \ref{Bey} and the well-known Kleitman-West problem, which asks for the maximum number of pairs of edges intersecting in exactly $k-1$ elements within $\cal F\subseteq \binom{[n]}k$, over all such families with a fixed size  $|\cal F|$.
By combining Bey's inequality with the  Erd\H{o}s--Ko--Rado Theorem (Theorem \ref{EKR1961}), Brooks and Linz \cite{BL}
established an Erd\H{o}s--Ko--Rado--type theorem in the context of codegree squared extremal number for $t=1$.
In the same literature, Brooks and Linz also proved versions of the Erd\H{o}s Matching Conjecture and the $t$-intersecting Erd\H{o}s--Ko--Rado theorem for the codegree squared extremal number for sufficiently  large $n$. They determined the exact codegree squared extremal number for linear $3$-paths and $3$-cycles, and asymptotically determined the codegree squared extremal number for linear $s$-paths and $s$-cycles for $s \ge 4$.

In the present paper,  by adapting the generating set method \cite{AK1996,AK} to $\ell_2$-norm problem for $t$-intersecting families, we prove an Erd\H{o}s--Ko--Rado Theorem in $\ell_2$-norm for all $n\ge (t+1)(k-t+1)$.
This confirms a conjecture of Brooks and Linz \cite{BL}.

\begin{theo}\label{maintheo1}
Let $t,k,n$ be positive integers such that $t\leq k\leq n$. If $ \mathcal F \subseteq \binom{[n]}{k} $ is $t$-intersecting, then for $n\ge (t+1)(k-t+1)$, we have
\[{\rm co}_2(\cal F)\le {\binom{n-t}{k-t}}(t+(n-k+1)(k-t)),\]
equality holds if and only if $\mathcal{F}=\{F\in {\binom{[n]}{k}}: T\subseteq F\}$ for some $t$-subset $T$ of $[n]$.
\end{theo}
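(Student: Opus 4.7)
My plan is built on a pair-decomposition of ${\rm co}_2$, the standard shifting technique adapted to the $\ell_2$-norm, and the Ahlswede--Khachatrian generating set method.

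First, I will derive the identity
\[
{\rm co}_2(\mathcal F) \;=\; \sum_{E \in \binom{[n]}{k-1}} d(E)^2 \;=\; k|\mathcal F| + P(\mathcal F),
\]
where $P(\mathcal F) := |\{(F_1,F_2)\in\mathcal F^2: F_1\ne F_2,\ |F_1\cap F_2|=k-1\}|$ counts ordered tight paths of length $2$ in $\mathcal F$. This follows by writing $d(E)^2 = d(E)(d(E)-1) + d(E)$, noting that each ordered pair of distinct edges meeting in $k-1$ vertices is witnessed by the unique shadow $E = F_1\cap F_2$, and using $\sum_E d(E) = k|\mathcal F|$. The target then splits into the classical bound $|\mathcal F|\le\binom{n-t}{k-t}$ from Theorem~\ref{EKR1961} and the generalised Tur\'an bound
\[
P(\mathcal F)\;\le\;(k-t)(n-k)\binom{n-t}{k-t},
\]
which I must prove for $t$-intersecting $\mathcal F$ with $n\ge(t+1)(k-t+1)$. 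Summing the two yields the claimed inequality, since $k + (k-t)(n-k) = t + (n-k+1)(k-t)$.

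Second, I reduce to shifted families via the monotonicity ${\rm co}_2(S_{ij}(\mathcal F))\ge{\rm co}_2(\mathcal F)$. Partition the $(k-1)$-sets of $[n]$ into those with both $i,j$ in or both out (on which $d(E)$ is invariant under $S_{ij}$) and the pairs $(E_0\cup\{i\},\, E_0\cup\{j\})$ with $E_0\in\binom{[n]\setminus\{i,j\}}{k-2}$. For such a pair, writing $u=d(E_0\cup\{j\})$, $v=d(E_0\cup\{i\})$, and $w$ for the number of $x\in[n]\setminus(E_0\cup\{i,j\})$ with both $E_0\cup\{i,x\}$ and $E_0\cup\{j,x\}$ in $\mathcal F$, one checks that the shifted codegrees become $w$ and $u+v-w$, preserving the sum $u+v$, and
\[
w^2 + (u+v-w)^2 - u^2 - v^2 \;=\; 2(w-u)(w-v)\;\ge\;0
\]
since $w\le\min(u,v)$. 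Summing over all pairs, ${\rm co}_2$ (and therefore $P$, since $|\mathcal F|$ is preserved) does not decrease, so one may assume $\mathcal F$ is left-compressed.

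Third, for a shifted $t$-intersecting $\mathcal F$ with $n\ge(t+1)(k-t+1)$, I apply the Ahlswede--Khachatrian generating set method to produce an $r\in\{0,1,\ldots,k-t\}$ such that $\mathcal F\subseteq\mathcal A_r := \{F\in\binom{[n]}{k}:|F\cap[t+2r]|\ge t+r\}$. Since codegrees are monotone under set-inclusion and $x\mapsto x(x-1)$ is non-decreasing on $\mathbb Z_{\ge0}$, this yields ${\rm co}_2(\mathcal F)\le{\rm co}_2(\mathcal A_r)$, hence $P(\mathcal F)\le P(\mathcal A_r)$. I then compute $P(\mathcal A_r)$ by splitting its edges according to the intersection pattern with $[t+2r]$, obtaining an explicit polynomial in $n,k,t,r$, and compare these expressions to verify that $r=0$ is the unique maximiser for $n\ge(t+1)(k-t+1)$ (with tightness only in the star case beyond the threshold). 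This delivers both the inequality and the equality characterisation. The main obstacle is establishing the structural containment $\mathcal F\subseteq\mathcal A_r$: the classical AK theorem asserts only that the $\mathcal A_r$ are size-optima, whereas here I need the stronger statement that every shifted $t$-intersecting family is itself contained in some $\mathcal A_r$, which requires a careful iterative compression argument tracking, for each $F\in\mathcal F$, the canonical witness of $t$-intersection; the subsequent polynomial comparison of the $P(\mathcal A_r)$ is elementary but delicate at the Frankl--Wilson threshold.
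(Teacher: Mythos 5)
Your first two steps match the paper's Lemmas 2.1 and 2.2: the decomposition ${\rm co}_2(\mathcal F)=k|\mathcal F|+2\zeta_{k-1}(\mathcal F)$ is exactly the paper's reformulation (you count ordered pairs, they count unordered, so the factor of $2$), and the shift-monotonicity of the pair count is the content of Lemma 2.2 (proved there by an explicit injection on intersecting pairs rather than by your per-$E_0$ codegree bookkeeping, but same effect). The gap is in step 3, and it is fatal as stated. The claim that every left-compressed $t$-intersecting $\mathcal F\subseteq\binom{[n]}{k}$ lies in some $\mathcal A_r=\{F:|F\cap[t+2r]|\ge t+r\}$ is simply false, and no ``careful iterative compression'' can rescue it. Take $t=1$, $k\ge 3$, $n$ large, and let $\mathcal H=\{F:1\in F,\ F\cap[2,k+1]\ne\emptyset\}\cup\{[2,k+1]\}$ be the (left-compressed) Hilton--Milner family. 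Then $[2,k+1]$ shows $\mathcal H\not\subseteq\mathcal A_0$; the set $\{1\}\cup[k+1,2k-1]\in\mathcal H$ has $|F\cap[2r+1]|=\max(1,2r+2-k)<r+1$ for every $r\in[1,k-2]$, so it is in none of those $\mathcal A_r$; and $\{1,2,\ldots,k-1,n\}\in\mathcal H$ is not in $\mathcal A_{k-1}=\{F\subseteq[2k-1]\}$. So $\mathcal H$ is in no $\mathcal A_r$, yet it is shifted and $1$-intersecting. The Ahlswede--Khachatrian theorem asserts only that the $\mathcal A_r$ achieve the maximum \emph{size}; it gives no containment in the inclusion order, and you cannot pass from ${\rm co}_2(\mathcal F)\le{\rm co}_2(\mathcal A_r)$ via monotonicity because the premise $\mathcal F\subseteq\mathcal A_r$ fails.

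What the paper does instead --- and what makes the generating-set method work here --- is the genuine AK ``pushing--pulling'' perturbation on an \emph{extremal} family, rather than a containment lemma for arbitrary shifted families. Take a left-compressed maximiser $\mathcal F$ of ${\rm co}_2$, pick a minimal generating set $g(\mathcal F)$ with smallest top coordinate $s$, and suppose for contradiction that $g(\mathcal F)\ne\{[t]\}$. Then one exhibits modified $t$-intersecting families $\mathcal F_1,\mathcal F_2$ (or $\mathcal F_3$ in the balanced case $i=j$) obtained by shrinking the layer $g^*_{s+t-i}$ and growing $g^*_i$ into $g^{*\prime}_i$, and proves via Lemmas 3.2 and 3.3, plus the arithmetic Lemmas 4.1--4.4, that at least one of them has strictly larger ${\rm co}_2$ when $n\ge(t+1)(k-t+1)$. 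This collapses everything to a single residual comparison, $\mathcal A_0$ versus $\mathcal A_1$ (the paper's Lemma 4.5), rather than the full spectrum of $\mathcal A_r$'s you would have to compare. So your identification of step 3 as ``the main obstacle'' is exactly right, but the cure is not a structural containment --- it is a local improvement argument on the generating set of an assumed optimum.
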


Let $\cal F$ be a $k$-graph and $\mathscr H$ be a family of $k$-graphs. The generalized Tur\'an number $ex(n,\cal F,\mathscr H)$ is defined as the maximum number of copies of $\cal F$ in $n$-vertex $k$-graph $\cal G$, subject to the constraint that $\cal G$ is $\mathscr H$-free.
When $\cal F$ is a single edge, $ex(n,\cal F,\mathscr H)=ex(n,\mathscr H)$ corresponds to the classic Tur\'an number.
Determining $ex(n,\cal F,\mathscr H)$ for some  specific   $\cal F$ and $\mathscr H$ is a challenging and important problem in extremal combinatorics.
Let $\cal P_2=\{[k],[k-1]\cup \{k+1\}\}$ be a $k$-uniform tight path of length 2, and $\mathscr P=\{\{[k], [1,i]\cup [k+1,2k-i]\}: i\in \{0,1,\cdots,t-1\}\}$ be the families of all $k$-uniform path of length $2$ with  intersection size less than $t$.
The Kleitman-West problem is a discrete edge isoperimetric problem  stated as follows: Given positive integers $n > k > 0$ and $0\le m\le \binom{n}{k}$, which hypergraph $\cal F \subseteq \binom{[n]}{k}$ with $|\cal F| = m$ maximizes the number of copies $\cal P_2$?
This problem remains open and the progress made so far can be found in the references  \cite{AKa,AC,GLM,DGS2016,Harper,Har04}.
In the present paper, we consider similar problems for forbidding $\mathscr P$ rather than for fixed $m$.
We determine $ex(n,\cal P_2,\mathscr P)$ for all $n\ge (t+1)(k-t+1)$.
\begin{theo}\label{maintheo2}
Let $t,k,n$ be positive integers such that $t\leq k\leq n$. If $n\ge (t+1)(k-t+1)$, then
\[ex(n,\cal P_2,\mathscr P) =  \frac 12(k-t)(n-k)\binom{n-t}{k-t},\]
where the unique extremal family is $\mathcal{F}=\{F\in {\binom{[n]}{k}}: T\subseteq F\}$ for some $t$-subset $T$ of $[n]$.
\end{theo}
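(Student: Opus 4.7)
The plan is to express Theorem \ref{maintheo2} as a corollary of Theorem \ref{maintheo1} via a clean double counting. Each copy of $\cal P_2$ in $\cal F$ is an unordered pair $\{A,B\} \subseteq \cal F$ with $|A\cap B| = k-1$, and each $(k-1)$-set $E$ is the common intersection of exactly $\binom{d(E)}{2}$ such pairs. Combined with $\sum_E d(E) = k|\cal F|$ and $x^2 = x + 2\binom{x}{2}$, this yields the identity
\[
\#\{\cal P_2 \subseteq \cal F\} = \sum_{E \in \binom{[n]}{k-1}}\binom{d(E)}{2} = \tfrac{1}{2}\bigl({\rm co}_2(\cal F) - k|\cal F|\bigr).
\]

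First I would verify tightness on the $t$-star $\cal F^* = \{F \in \binom{[n]}{k}: T \subseteq F\}$. A routine codegree calculation shows $d(E) = n-k+1$ for the $\binom{n-t}{k-t-1}$ sets $E \supseteq T$, $d(E) = 1$ for the $t\binom{n-t}{k-t}$ sets meeting $T$ in exactly $t-1$ elements, and $d(E) = 0$ otherwise. Only the first group contributes to $\sum\binom{d(E)}{2}$, and that contribution equals $\binom{n-t}{k-t-1}\binom{n-k+1}{2} = \tfrac12(k-t)(n-k)\binom{n-t}{k-t}$, matching the claimed value.

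For the upper bound, using the algebraic identity $t + (n-k+1)(k-t) = k + (n-k)(k-t)$, the conclusion of Theorem \ref{maintheo1} rewrites as
\[
{\rm co}_2(\cal F) \le k\binom{n-t}{k-t} + (k-t)(n-k)\binom{n-t}{k-t}.
\]
Plugging this into the identity above gives
\[
2\cdot\#\{\cal P_2 \subseteq \cal F\} \le (k-t)(n-k)\binom{n-t}{k-t} + k\Bigl(\binom{n-t}{k-t} - |\cal F|\Bigr),
\]
which is exactly the desired bound whenever $|\cal F| = \binom{n-t}{k-t}$.

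The main obstacle is the error term $k(\binom{n-t}{k-t} - |\cal F|)$: it is nonnegative by Erd\H{o}s--Ko--Rado, so the naive estimate goes the wrong way for families that do not attain the EKR maximum. I expect this difficulty is absorbed inside the proof of Theorem \ref{maintheo1}: the generating set argument adapted to the $\ell_2$-setting should in fact establish the sharper inequality ${\rm co}_2(\cal F) - k|\cal F| \le (k-t)(n-k)\binom{n-t}{k-t}$, from which Theorem \ref{maintheo1} itself is recovered by adding the EKR bound $k|\cal F| \le k\binom{n-t}{k-t}$. Granted this sharper form, Theorem \ref{maintheo2} follows immediately on dividing by $2$, and the characterization of equality transfers from the uniqueness of the $t$-star in Theorem \ref{maintheo1}.
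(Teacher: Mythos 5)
Your proposal is correct and takes essentially the same route as the paper: the paper proves Theorem \ref{maintheo2} by the single remark that it is a consequence of the proof of Theorem \ref{maintheo1}, and this works precisely because that proof establishes $\zeta_{k-1}(\cal F_1)+\zeta_{k-1}(\cal F_2)>2\zeta_{k-1}(\cal F)$ as a standalone step (with Lemma \ref{compressed} already handling $\zeta_{k-1}$ under compression), which via Lemma \ref{transform} is exactly the sharper inequality on ${\rm co}_2(\cal F)-k|\cal F|$ that you conjecture. Your observation that the bare statement of Theorem \ref{maintheo1} plus Lemma \ref{transform} leaves a wrong-way error term $k\bigl(\binom{n-t}{k-t}-|\cal F|\bigr)$ is a real subtlety that the paper's one-line remark glosses over, and you correctly identify that the fix must live inside the proof rather than in the statement.
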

The bound for $n$ is tight. Specifically, a $k$-graph $\cal F$ is $\mathscr P$-free if and only if $\cal F$ is $t$-intersecting. Consequently, Theorem \ref{maintheo2} also implies an Erd\H{o}s--Ko--Rado--type result: For  $n\geq(t+1)(k-t+1)$, the maximum number of copies of $\cal P_2$ in $t$-intersecting families is attained in the full $t$-star.

{\bf Remark.} 
Suppose that $\cal F \subseteq \binom{[n]}{k}$ is a $t$-intersecting family with maximality of number of copies of $\cal P_2$.
As new sets are added to  $\cal F$, the number of copies of $\cal P_2$ of the resulting family will not decrease. 
Hence, we can assume that $\cal F$ is maximal.
As Theorem \ref{maintheo2} follows by the proof of Theorem \ref{maintheo1}, do not include the proof regarding the number of edges.

We say that a $t$-intersecting family $\cal F$ is trivial if all its members share $t$ common elements, and non-trivial otherwise.
Erd\H{o}s, Ko and Rado \cite{EKR1961}  posed the problem of determining the maximum size of a non-trivial intersecting family of $k$-element subsets of $[n]$.
This question was subsequently addressed by Hilton and Milner \cite{HM} in 1967, who established the following result.

\begin{theo}[Hilton--Milner, \cite{HM}] \label{thmhm}
Let $n$ and $k$ be two positive integers with $k\ge 2$ and $n\ge 2k$. If $\mathcal{F}$ is a non-trivial intersecting family of $\binom{[n]}{k}$, then
\begin{equation*}
 |\mathcal{F}| \le {n-1 \choose k-1} -{n-k-1 \choose k-1} +1.
 \end{equation*}
 For $n> 2k$,  equality holds if and only if $\cal F\cong \cal H(n, k, 1)$, where $\cal H(n, k, 1)$ is defined in the following.
\end{theo}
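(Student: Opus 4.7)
The approach I have in mind is the classical shifting-plus-cross-intersection method of Hilton and Milner. I first pass to a left-compressed family: iterating the shift $S_{ij}$ ($i<j$) preserves both $|\mathcal{F}|$ and the intersecting property, so WLOG $\mathcal{F}$ is shift-invariant. The usual caveat is that shifting can turn a nontrivial family into a trivial one; in that exceptional case the original $\mathcal{F}$ already has a very restricted shape that one verifies separately to obey the Hilton--Milner bound. Under the remaining assumption that $\mathcal{F}$ is shift-invariant and still nontrivial, the set $E_0:=\{2,3,\ldots,k+1\}$ is forced into $\mathcal{F}$, obtained by shift-descending any member that avoids $1$ down to $E_0$.

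Next, split $\mathcal{F}=\mathcal{F}_1\sqcup\mathcal{F}_2$ according to whether $1\in F$, and set $\mathcal{F}_1'=\{F\setminus\{1\}:F\in\mathcal{F}_1\}\subseteq\binom{[2,n]}{k-1}$. The pair $(\mathcal{F}_1',\mathcal{F}_2)$ is cross-intersecting on $[2,n]$, $\mathcal{F}_2$ is intersecting, and $E_0\in\mathcal{F}_2$. Cross-intersection with $E_0$ alone gives
$$|\mathcal{F}_1'|\le\binom{n-1}{k-1}-\binom{n-k-1}{k-1},$$
so combined with $|\mathcal{F}_2|=1$ this already closes the Hilton--Milner bound. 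The useful observation for larger $\mathcal{F}_2$ is that each extra $F'\in\mathcal{F}_2\setminus\{E_0\}$ must contain some index $j\ge k+2$, and forbids from $\mathcal{F}_1'$ every $(k-1)$-subset of $[2,n]\setminus F'$. A short Pascal-type computation bounds the number of such new exclusions (beyond those already due to $E_0$) below by $\binom{n-k-2}{k-2}$, which is $\ge 1$ exactly when $n\ge 2k$. Hence each extra element of $\mathcal{F}_2$ costs at least one unit of the $\mathcal{F}_1'$ budget.

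Iterating this trade-off delivers $|\mathcal{F}|=|\mathcal{F}_1'|+|\mathcal{F}_2|\le\binom{n-1}{k-1}-\binom{n-k-1}{k-1}+1$, which is the desired bound. The main technical obstacle is precisely the iteration: one must show that the new exclusions coming from distinct $F',F''\in\mathcal{F}_2\setminus\{E_0\}$ do not overlap too heavily, and here the shift-invariance of $\mathcal{F}$ is essential, forcing all such $F'$ into a common downward cone in the componentwise order and making their exclusion contributions almost disjoint. For uniqueness when $n>2k$, the new-exclusion count $\binom{n-k-2}{k-2}$ is strictly greater than $1$ except in the easily-handled boundary $k=2$; consequently any $F'\ne E_0$ in $\mathcal{F}_2$ would either push $|\mathcal{F}|$ strictly below the Hilton--Milner bound or collapse $\mathcal{F}$ to a trivial family. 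So $\mathcal{F}_2=\{E_0\}$, the shifted $\mathcal{F}$ equals $\mathcal{H}(n,k,1)$ exactly, and un-shifting preserves the isomorphism class.
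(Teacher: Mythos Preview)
The paper does not prove Theorem~\ref{thmhm}; it is stated as a background result and attributed to Hilton and Milner~\cite{HM} without proof. So there is no ``paper's own proof'' to compare against.

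On the merits of your sketch: the overall architecture (shift, locate $E_0=\{2,\dots,k+1\}$ in $\mathcal F_2$, split on membership of $1$, and exploit cross-intersection between $\mathcal F_1'$ and $\mathcal F_2$) is a standard and correct route to the Hilton--Milner bound. However, the iteration step you flag as ``the main technical obstacle'' is not actually carried out. The assertion that each extra $F'\in\mathcal F_2\setminus\{E_0\}$ kills at least $\binom{n-k-2}{k-2}$ \emph{new} members of $\mathcal F_1'$, and that these new exclusions are ``almost disjoint'' across different $F'$, is precisely the nontrivial content, and shift-invariance alone does not obviously deliver it in the way you describe. The clean way to close this is not to iterate set-by-set but to invoke a single cross-intersection inequality of Hilton--Milner/Frankl type: for cross-intersecting $\mathcal A\subseteq\binom{[m]}{a}$, $\mathcal B\subseteq\binom{[m]}{b}$ with $\mathcal B\neq\emptyset$ and $m\ge a+b$, one has $|\mathcal A|+|\mathcal B|\le\binom{m}{a}-\binom{m-b}{a}+1$ (with the appropriate size ordering). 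Applied to $(\mathcal F_1',\mathcal F_2)$ on $[2,n]$ this gives the bound in one stroke and also drives the uniqueness analysis. As written, your proposal is an outline with the key lemma still missing; you should either supply the disjointness argument in full or replace the iteration by the cross-intersection inequality.
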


Brooks and Linz \cite{BL} conjectured an analogous result of Hilton--Milner Theorem.
 It is of particular interest to explore the analogous problem in the  $\ell_2$-norm for non-trivial  $t$-intersecting families.
We
introduce two particular non-trivial $t$-intersecting families, $\cal A(n, k, t)$ and $\cal H(n, k, t)$, defined as follows:
\[\cal H(n, k, t):=\{F\subseteq \binom{[n]}{k}: [t]\subseteq F, F\cap [t+1,k+1]\neq \emptyset\} \cup \{[k+1]\setminus \{i\}:i\in [t]\},\]
\[\cal A(n, k, t):=\{F\subseteq \binom{[n]}{k}: |F\cap [t+2]|\ge t+1\}.\]

\begin{theo}[Frankl\cite{Fra1978}, Frankl and F\"{u}redi\cite{FF}, and Ahlswede and Khachatrian\cite{AK1996}]
Let $t,k,n$ be positive integers such that $1\le t\leq k\leq n$. If $ \mathcal F \subseteq \binom{[n]}{k} $ is non-trivial $t$-intersecting, then for $n\ge (t+1)(k-t+1)$, we have
\[|\cal F|\le \max\{|\cal H(n, k, t)|,|\cal A(n, k, t)|\}.\]
For $n> (t+1)(k-t+1)$,  equality holds if and only if $\cal F\cong \cal H(n, k, t)$ or $\cal F\cong \cal A(n, k, t)$.
\end{theo}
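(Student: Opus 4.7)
The plan is to combine standard compression with the Ahlswede--Khachatrian pushing--pulling method, reducing the harder sub-case to the Hilton--Milner theorem. First, apply left-compressions $S_{ij}$ ($i<j$), which preserve both the $t$-intersection property and $|\mathcal F|$; so an extremal non-trivial $t$-intersecting family can, modulo one caveat, be replaced by a shifted one. The caveat is that a single compression may collapse a non-trivial family into a trivial $t$-star. When this first occurs at some $S_{ij}$, the pre-compression family is contained in a small union of near-complete $t$-stars, which one verifies directly is bounded by $\max\{|\mathcal H(n,k,t)|,|\mathcal A(n,k,t)|\}$ for $n\ge(t+1)(k-t+1)$. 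So we may assume $\mathcal F$ is shifted and non-trivial; then the kernel $K=\bigcap_{F\in\mathcal F}F$ is an initial segment $[s]$ with $s\le t-1$.

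I would then split into two cases according to the size of the kernel. If $s=t-1$, every $F$ contains $[t-1]$, and the trace $\mathcal F'=\{F\setminus[t-1]:F\in\mathcal F\}$ is a non-trivial intersecting family of $(k-t+1)$-sets on $n-t+1$ vertices. Since $n\ge(t+1)(k-t+1)$ gives $n-t+1\ge 2(k-t+1)$, Theorem~\ref{thmhm} yields $|\mathcal F'|\le\binom{n-t}{k-t}-\binom{n-k-1}{k-t}+1=|\mathcal H(n,k,t)|$, and for $n>(t+1)(k-t+1)$ equality forces $\mathcal F'$ to be Hilton--Milner, which lifts to $\mathcal F\cong\mathcal H(n,k,t)$. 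If instead $s\le t-2$, one is in the genuine Ahlswede--Khachatrian regime: use pushing--pulling to show that among shifted $t$-intersecting families with kernel a strict initial segment of $[t-1]$, the size is maximized by one of the AK families $\mathcal F_r=\{F\in\binom{[n]}{k}:|F\cap[t+2r]|\ge t+r\}$ with $r\ge 1$. A direct binomial comparison, leveraging $n\ge(t+1)(k-t+1)$, shows $|\mathcal F_r|$ is maximized at $r=1$, which is exactly $\mathcal A(n,k,t)$.

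The main obstacle is the subcase $s\le t-2$: ruling out ``intermediate'' shifted families that are neither of $\mathcal F_r$-type nor of $\mathcal H$-type. The Ahlswede--Khachatrian pushing--pulling induction handles this by iteratively rearranging the indicator of $\mathcal F$ so that the family moves monotonically (in size) toward some canonical $\mathcal F_r$, while preserving $t$-intersection and the parameters $n,k$; the delicate bookkeeping is the simultaneous control of the kernel and the ``width'' of the shifted family. Combining both cases yields the bound $|\mathcal F|\le\max\{|\mathcal H(n,k,t)|,|\mathcal A(n,k,t)|\}$, and tracing strictness through the Hilton--Milner uniqueness in Case~A and through the binomial comparison in Case~B produces the claimed characterization of equality for $n>(t+1)(k-t+1)$.
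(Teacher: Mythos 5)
This theorem is stated in the paper as a cited result (Frankl, Frankl--F\"uredi, Ahlswede--Khachatrian); the paper itself gives no proof of it, so there is nothing to compare your argument against directly. What follows is an assessment of your sketch on its own terms.

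There is a structural gap in your case split by kernel size. First, note that both $\mathcal H(n,k,t)$ and $\mathcal A(n,k,t)$ have empty kernel: every element of $[t]$ is missing from some $[k+1]\setminus\{i\}$ in $\mathcal H(n,k,t)$, and in $\mathcal A(n,k,t)$ each coordinate of $[t+2]$ is missing from some member. So for $t\ge 2$ both extremal families fall into your Case~B ($s\le t-2$, indeed $s=0$), and Case~B as you have stated it only produces the AK families $\mathcal F_r$, which $\mathcal H(n,k,t)$ is not. Your Case~A ($s=t-1$) therefore cannot, and does not, recover $\mathcal H(n,k,t)$: the Hilton--Milner bound you obtain there is $\binom{n-t}{k-t}-\binom{n-k-1}{k-t}+1$, whereas $|\mathcal H(n,k,t)|=\binom{n-t}{k-t}-\binom{n-k-1}{k-t}+t$, so for $t\ge 2$ your displayed identity ``$=|\mathcal H(n,k,t)|$'' is simply false, and the equality case lifts to $\mathcal H(n,k,t)$ minus the $t-1$ sets $\{[k+1]\setminus\{i\}:i\in[t-1]\}$ --- a non-maximal family, not $\mathcal H(n,k,t)$. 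In short, your decomposition never produces $\mathcal H(n,k,t)$ as a potential extremum, which is fatal since $\mathcal H(n,k,t)$ really is extremal for part of the parameter range.

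The correct structure, and the one that the Ahlswede--Khachatrian 1996 argument (and the paper's own $\ell_2$-norm analogues, Theorems~\ref{maintheo3} and~\ref{maintheo4}) uses, runs through generating sets rather than kernels. After left-compression (and a maximality lemma guaranteeing a maximal non-trivial left-compressed family exists, the paper's Lemma~\ref{maximality}), one shows via generating-set surgery that the generating set must be $g(\mathcal F_s)=\{[t]\cup\{\ell\}:\ell\in[t+1,s]\}\cup\{[s]\setminus\{\ell\}:\ell\in[t]\}$ for some $s\in[t+2,k+1]$; this one-parameter family interpolates continuously between $\mathcal A(n,k,t)$ at $s=t+2$ and $\mathcal H(n,k,t)$ at $s=k+1$, and the remaining work is a monotonicity/convexity comparison showing the extremum occurs at one of the two endpoints. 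Your sketch misses this interpolating family entirely, replacing it with a kernel-size dichotomy that puts $\mathcal H(n,k,t)$ and $\mathcal A(n,k,t)$ in the same case while asserting that case only yields $\mathcal A$-type families.
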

For $t = 2$ and $n > n_0(k,t)$, the result was established in \cite{Fra1978}. For $t \ge 20$ and all $n > (k-t + 1)(t + 1)$
 the conclusion can be inferred from \cite{FF}.  However, the result was first proven in full generality in \cite{AK1996}.

It is natural to study the analogous problem  in $\ell_2$-norm for $t$-intersecting families.
In the present paper, we  establish the following two Frankl--Hilton--Milner--type  Theorems for $t\ge 2$.
\begin{theo}\label{maintheo3}
Let $t,k,n$ be positive integers such that $2\le t\leq k\leq n$. If $ \mathcal F \subseteq \binom{[n]}{k} $ is non-trivial $t$-intersecting, then for $n\ge (t+1)(k-t+1)$, we have
\[{\rm co}_2(\cal F)\le \max\{{\rm co}_2(\cal H(n, k, t)),{\rm co}_2(\cal A(n, k, t))\},\]
 equality holds if and only if $\cal F\cong \cal H(n, k, t)$ or $\cal F\cong \cal A(n, k, t)$.
\end{theo}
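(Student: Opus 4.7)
The plan is to adapt the argument for Theorem \ref{maintheo1}, enhanced by the Ahlswede--Khachatrian classification of non-trivial $t$-intersecting families, in direct parallel to the way the Frankl--Ahlswede--Khachatrian theorem extends the classical Erd\H{o}s--Ko--Rado theorem.

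First I would rewrite ${\rm co}_2(\mathcal{F})$ combinatorially. Double-counting the triples $(E,F,G)$ with $F,G\in\mathcal{F}$ and $E\in\binom{F\cap G}{k-1}$ yields
\[
{\rm co}_2(\mathcal{F}) = k|\mathcal{F}| + 2p_2(\mathcal{F}),
\]
where $p_2(\mathcal{F})$ denotes the number of unordered pairs $\{F,G\}\subseteq \mathcal{F}$ with $|F\cap G|=k-1$, i.e.\ the number of copies of $\mathcal{P}_2$ in $\mathcal{F}$. Thus the task becomes obtaining a joint upper bound on $|\mathcal{F}|$ and $p_2(\mathcal{F})$ under the non-trivial $t$-intersecting constraint, paralleling the structure of the proof behind Theorem \ref{maintheo2}.

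Next I would apply a compression step. Via a sequence of $(i,j)$-shifts that preserve $t$-intersection, one reduces to a left-compressed $\mathcal{F}$ while not decreasing ${\rm co}_2$, using the same local rearrangement comparison of affected codegrees employed in proving Theorem \ref{maintheo1}. The new subtlety is that shifting can turn a non-trivial family into a trivial one; when this first occurs, one argues that the pre-shifted $\mathcal{F}$ was already a subfamily of $\mathcal{H}(n,k,t)$ (up to relabeling of $[n]$) via a Hilton--Milner-type structural argument.

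Having reduced to left-compressed non-trivial $t$-intersecting $\mathcal{F}$, I would invoke the Ahlswede--Khachatrian structure theorem to conclude that, for $n\ge (t+1)(k-t+1)$ and $t\ge 2$, such an $\mathcal{F}$ is contained in some Frankl family $\mathcal{F}_j=\{F:|F\cap [t+2j]|\ge t+j\}$ with $j\ge 1$, or in a Hilton--Milner-type family equivalent to $\mathcal{H}(n,k,t)$. Since $d(E)$ is monotone under inclusion of $\mathcal{F}$, so is ${\rm co}_2$, and one may assume $\mathcal{F}$ equals one of these maximal families. An explicit computation of ${\rm co}_2(\mathcal{F}_j)$ for $j\ge 1$ and of ${\rm co}_2(\mathcal{H}(n,k,t))$ then shows that, throughout the range $n\ge (t+1)(k-t+1)$, the maximum is attained only at $j=1$ (that is, $\mathcal{A}(n,k,t)$) or at $\mathcal{H}(n,k,t)$; uniqueness in the equality case comes from strictness of the shifting and inclusion comparisons.

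The main obstacle will be the shifting step combined with the non-triviality constraint. The shift monotonicity of ${\rm co}_2$ itself is comparable to what was done for Theorem \ref{maintheo1}, but now one must additionally track precisely when non-triviality is destroyed and verify at that moment that $\mathcal{F}$ already sits inside a copy of $\mathcal{H}(n,k,t)$ with no loss in either $|\mathcal{F}|$ or $p_2(\mathcal{F})$. This delicate boundary analysis is what distinguishes the present theorem from Theorem \ref{maintheo1} and also explains the restriction $t\ge 2$, since for $t=1$ further non-trivial extremal configurations (outside the $\mathcal{H}$/$\mathcal{A}$ dichotomy) appear.
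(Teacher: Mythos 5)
Your high-level plan (reduce via compression, rewrite ${\rm co}_2(\mathcal F)=k|\mathcal F|+2\zeta_{k-1}(\mathcal F)$, then compare explicit candidate families) matches the paper's strategy, but the core structural claim you rely on is wrong, and this gap is not cosmetic.

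You assert that, after compression, a non-trivial $t$-intersecting family is contained in a Frankl family $\mathcal F_j=\{F:|F\cap[t+2j]|\ge t+j\}$ with $j\ge1$ or in a copy of $\mathcal H(n,k,t)$, citing Ahlswede--Khachatrian. That is not what the AK non-trivial theorem says: it bounds the \emph{size} of such families and identifies the maximizers, but it does not classify all maximal non-trivial $t$-intersecting families as subfamilies of $\mathcal F_j$ or $\mathcal H$. In fact there is an entire interpolating chain $\mathcal F_s$, $t+2\le s\le k+1$, with generating set $\{[t]\cup\{\ell\}:\ell\in[t+1,s]\}\cup\{[s]\setminus\{\ell\}:\ell\in[t]\}$, running from $\mathcal F_{t+2}=\mathcal A(n,k,t)$ to $\mathcal F_{k+1}=\mathcal H(n,k,t)$; for $t+3\le s\le k$ these families are left-compressed, non-trivial, $t$-intersecting, and contained in neither $\mathcal A(n,k,t)$ nor $\mathcal H(n,k,t)$ nor any Frankl family $\mathcal F_j$ with $j\ge 2$. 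Since you are maximizing ${\rm co}_2$ rather than $|\mathcal F|$, there is no a priori reason the optimizer is one of the size-extremal families, so you cannot appeal to the AK size theorem to discard the $\mathcal F_s$ with $t+3\le s\le k$. Your proposal never engages with these intermediate candidates, which is precisely where the real work lies.

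The paper closes this gap differently: it reuses the generating-set argument of Theorem~\ref{maintheo1} to show that a ${\rm co}_2$-maximal left-compressed non-trivial family must have $i=t+1$ and generating set exactly of the $\mathcal F_s$ form, and then (Lemma~\ref{norm3}) computes $\zeta_{k-1}(\mathcal F_{s+1})-\zeta_{k-1}(\mathcal F_s)$ and proves this difference admits no interior strict local maximum in $s\in[t+2,k+1]$ when $n\ge(t+1)(k-t+1)$, forcing the maximum to an endpoint, i.e.\ $\mathcal A(n,k,t)$ or $\mathcal H(n,k,t)$. Your plan would need to be replaced by (or supplemented with) an argument of this kind. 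Separately, your handling of the point where shifting destroys non-triviality is also hand-waved; the paper instead uses the AK maximality lemma (Lemma~\ref{maximality}) to guarantee existence of a left-compressed non-trivial ${\rm co}_2$-maximizer outright, avoiding the boundary analysis you describe.
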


For $\cal F\subseteq \binom{[n]}{k}$, let $\zeta_{k-1}(\cal F)$ denote the number of copies of $\cal P_2$ in $\cal F$, that is,
\[\zeta_{k-1}(\cal F)=|\{\{F,G\}:F,G \in \cal F, |F\cap G|=k-1\}|.\]
 By Theorem \ref{maintheo2}, we have $\zeta_{k-1}(\cal F)\le ex(n,\cal P_2,\mathscr P)$,  with equality holding if and only if $\cal F$ is a full $t$-star.
We obtain the second extremal size of $\zeta_{k-1}(\cal F)$ as follows.

\begin{theo}\label{maintheo4}
Let $t,k,n$ be positive integers such that $2\le t\leq k\leq n$. If $ \mathcal F \subseteq \binom{[n]}{k} $ is non-trivial $t$-intersecting, then for $n\ge (t+1)(k-t+1)$, we have
\[\zeta_{k-1}(\cal F)\le \max\{\zeta_{k-1}(\cal H(n, k, t)),\zeta_{k-1}(\cal A(n, k, t))\},\]
equality holds if and only if $\cal F\cong \cal H(n, k, t)$ or $\cal F\cong \cal A(n, k, t)$.
\end{theo}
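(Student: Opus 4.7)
The plan is to parallel the proof of Theorem \ref{maintheo3}, exploiting the elementary identity
\[
\zeta_{k-1}(\mathcal{F}) \;=\; \sum_{E\in\binom{[n]}{k-1}} \binom{d(E)}{2} \;=\; \frac{1}{2}\bigl({\rm co}_2(\mathcal{F}) - k|\mathcal{F}|\bigr),
\]
which holds because two distinct $k$-sets share a $(k-1)$-subset iff they meet in exactly $k-1$ elements, together with $\sum_E d(E)=k|\mathcal{F}|$. This reduces the problem to bounding the linear combination ${\rm co}_2(\mathcal{F})-k|\mathcal{F}|$, so the proof proceeds with this functional playing the role that ${\rm co}_2$ plays in Theorem \ref{maintheo3}, and Theorem \ref{maintheo4} is essentially the non-trivial analogue of Theorem \ref{maintheo2}.

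First I would apply the standard shifting operation $s_{ij}$ and verify, by the same degree-accounting used to handle ${\rm co}_2$, that it preserves the $t$-intersecting property and does not decrease $\zeta_{k-1}$; since $|\mathcal{F}|$ is invariant under shifting, only the ${\rm co}_2$-part needs checking, and this is essentially contained in the proof of Theorem \ref{maintheo3}. One may therefore assume $\mathcal{F}$ is shifted. At this stage the Ahlswede--Khachatrian structural theorem reduces the problem to comparing $\zeta_{k-1}$ on the short list of canonical shifted non-trivial $t$-intersecting families: the Frankl families
\[
\mathcal{F}_i \;=\; \Bigl\{F\in\binom{[n]}{k}: |F\cap[t+2i]|\ge t+i\Bigr\}, \qquad i\ge 1,
\]
together with the Hilton--Milner family $\mathcal{H}(n,k,t)$. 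Because $\zeta_{k-1}$ is monotone under inclusion and every shifted non-trivial $t$-intersecting family embeds in one of these, it suffices to evaluate $\zeta_{k-1}$ on the candidates.

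For each $\mathcal{F}_i$ the codegree $d(E)$ depends only on $|E\cap[t+2i]|$, so $|\mathcal{F}_i|$ and ${\rm co}_2(\mathcal{F}_i)$ admit explicit closed-form expressions in $n,k,t,i$; showing that ${\rm co}_2(\mathcal{F}_i)-k|\mathcal{F}_i|$ is monotone decreasing in $i$ for $i\ge 1$ and $n\ge(t+1)(k-t+1)$ then reduces to a single polynomial inequality obtained by comparing consecutive terms $\mathcal{F}_i$ and $\mathcal{F}_{i+1}$. The main obstacle lies in the head-to-head comparison of $\mathcal{A}(n,k,t)=\mathcal{F}_1$ with $\mathcal{H}(n,k,t)$: because the functional ${\rm co}_2-k|\mathcal{F}|$ can tip differently than ${\rm co}_2$ alone, the critical value of $n$ at which the winning family switches from $\mathcal{A}$ to $\mathcal{H}$ may differ from the corresponding threshold in Theorem \ref{maintheo3}, forcing an explicit and somewhat delicate polynomial comparison in $n,k,t$. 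The equality characterization is then extracted from the rigidity built into the AK classification combined with the strict-inequality cases in the polynomial comparisons above.
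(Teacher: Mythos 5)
There is a genuine gap in your identification of the candidate families. You reduce, via shifting and the ``Ahlswede--Khachatrian structural theorem,'' to the Frankl families $\mathcal{F}_i=\{F:|F\cap[t+2i]|\ge t+i\}$ together with $\mathcal{H}(n,k,t)$, and you assert that every shifted non-trivial $t$-intersecting family embeds in one of these. That assertion is false. Consider the left-compressed family with generating set $g=\{[t]\cup\{\ell\}:\ell\in[t+1,t+3]\}\cup\{[t+3]\setminus\{\ell\}:\ell\in[t]\}$ (the paper's $\mathcal{F}_{t+3}$). For $k>t+2$ it contains sets of the form $([t]\cup\{t+3\})\cup B$ with $B\subseteq[t+4,n]$, which lie in neither $\mathcal{A}(n,k,t)$ nor any $\mathcal{F}_i$ with $i\ge 2$ nor $\mathcal{H}(n,k,t)$. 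The AK theorem tells you which non-trivial family is \emph{largest}; it does not give a short list of maximal shifted families into which everything embeds, so the monotonicity-of-$\zeta_{k-1}$ argument cannot be applied to the list you wrote down.

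The missing ingredient is exactly the one-parameter family interpolating between $\mathcal{A}(n,k,t)$ and $\mathcal{H}(n,k,t)$: the families $\mathcal{F}_s$ with generating set $g(\mathcal{F}_s)=\{[t]\cup\{\ell\}:\ell\in[t+1,s]\}\cup\{[s]\setminus\{\ell\}:\ell\in[t]\}$, $s\in[t+2,k+1]$, where $\mathcal{F}_{t+2}=\mathcal{A}(n,k,t)$ and $\mathcal{F}_{k+1}=\mathcal{H}(n,k,t)$. The paper does not invoke AK as a black box here; it re-runs the generating set argument from the proof of Theorem \ref{maintheo1} in the non-trivial setting (one of the two competing modifications $\mathcal{F}_1,\mathcal{F}_2$ must become trivial, which pins down the generating set to the above form with $i=t+1$), and then compares consecutive $\zeta_{k-1}(\mathcal{F}_s)$ via Lemma \ref{norm3}, showing there is no interior local maximum along the parameter $s$. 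Your identity $\zeta_{k-1}(\mathcal{F})=\tfrac12({\rm co}_2(\mathcal{F})-k|\mathcal{F}|)$ and the observation that shifting does not decrease $\zeta_{k-1}$ are both correct and both appear in the paper (Lemma \ref{transform}, Lemma \ref{compressed}), but without the intermediate families $\mathcal{F}_s$ and a comparison across them, the argument does not close.
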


{\bf Remark.}  Suppose that $\cal F \subseteq \binom{[n]}{k}$ is a non-trivial $t$-intersecting family with maximality of number of copies of $\cal P_2$.
As new sets are added to  $\cal F$, the number of copies of $\cal P_2$ of the resulting family will not decrease. 
Hence, we can assume that $\cal F$ is maximal.
By Lemma \ref{transform}, Lemma \ref{compressed} and Lemma \ref{maximality}, we can assume that $\cal F$ is left-compressed.
As Theorem \ref{maintheo4} follows by the proof of Theorem \ref{maintheo3}, do not include the proof regarding the number of edges.

The proofs in this paper are based on the shift operator method \cite{EKR1961} and the generating set method \cite{AK}.
The definitions and fundamental properties of these methods will be introduced in the next two section. The complete proofs of our results will be provided in Sections $4$ and $5$. Finally, we will offer some concluding remarks.

\section{The left-compression operation}

Let $\mathcal F \subseteq \binom{[n]}{k}$ be a family.  For $i,j\in [n]$  and $A\in\mathcal A$, define
\[
  \delta_{ij}(A)=\begin{cases}(A\setminus\{j\})\cup\{i\}, & \hbox{if $j\in A,\ i\not\in A, (A\setminus\{j\})\cup\{i\}\not\in \mathcal{A}$;}\\A, & \hbox{otherwise,}\end{cases}
\]
and set $\Delta_{ij}(\mathcal A)=\{\delta_{ij}(A):A\in\mathcal A\}$ correspondingly. The process of  obtaining $\Delta_{ij}(\mathcal A)$ from $\mathcal A$ is called the shift operation, first introduced in \cite{EKR1961} (see also \cite{Fra1987}).

Observe  that $\Delta_{ij}(\mathcal A)$ maintains the same cardinality as  $\mathcal{A}$ and remains $k$-uniform.  A family $\mathcal A$ is called \emph{left-compressed} if $\Delta_{ij}(\mathcal A)=\mathcal A$ for all $1\leq i<j\leq n$. It is well known that if $\mathcal F$ is $t$-intersecting, then
 $\Delta_{ij}(\mathcal F)$ is so.

Similar to the classic extremal problems on $t$-intersecting families,
 one might expect that the left-compression algorithm would also be effective for addressing the $\ell_2$-norm problems in $t$-intersecting families.
Fortunately, this expectation is indeed justified. We begin by presenting a useful reformulation of the codegree squared sum for a $k$-uniform hypergraph, adapted from \cite{BL}. For completeness, we provide a concise proof.
\begin{lemma}\label{transform}
Let $\mathcal F \subseteq \binom{[n]}{k}$ be a family. Then
\[{\rm co}_2(\mathcal F)=k|\cal F|+2|\{\{F,G\}:F,G\in \cal F, |F\cap G|=k-1\}|.\]
 \end{lemma}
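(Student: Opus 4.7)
The plan is a straightforward double-counting argument, switching the order of summation in the definition of ${\rm co}_2(\mathcal F)$. First I would rewrite $d(E)^2$ as a count of ordered pairs, namely
\[
d(E)^2 = |\{(F,G)\in \mathcal F\times \mathcal F : E\subseteq F\cap G\}|,
\]
so that
\[
{\rm co}_2(\mathcal F) = \sum_{E\in \binom{[n]}{k-1}}\ \sum_{\substack{(F,G)\in \mathcal F^2\\ E\subseteq F\cap G}} 1.
\]

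Next I would swap the two sums, regrouping by the ordered pair $(F,G)$ first and counting the $(k-1)$-subsets $E$ of $F\cap G$. This gives
\[
{\rm co}_2(\mathcal F) = \sum_{(F,G)\in \mathcal F^2} \binom{|F\cap G|}{k-1}.
\]
Since $|F|=|G|=k$, the intersection size $|F\cap G|$ lies in $\{0,1,\dots,k\}$, and the binomial coefficient $\binom{|F\cap G|}{k-1}$ vanishes unless $|F\cap G|\in\{k-1,k\}$.

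The final step is to separate those two cases. The pairs with $|F\cap G|=k$ are exactly the diagonal pairs $(F,F)$, contributing $\binom{k}{k-1}=k$ each, for a total of $k|\mathcal F|$. The pairs with $|F\cap G|=k-1$ are ordered pairs of distinct sets whose intersection has size $k-1$, each contributing $\binom{k-1}{k-1}=1$, and the total number of such ordered pairs is exactly twice the number of unordered pairs $\{F,G\}$ with $|F\cap G|=k-1$. Summing yields the claimed identity. No real obstacle is expected here; the only thing to watch is cleanly handling the diagonal contribution and the factor of $2$ when passing from ordered to unordered pairs.
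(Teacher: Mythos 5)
Your proof is correct and uses essentially the same double-counting idea as the paper: the paper applies the identity $d^2 = d + 2\binom{d}{2}$ and interprets each term combinatorially, while you interpret $d(E)^2$ directly as ordered pairs $(F,G)$ with $E\subseteq F\cap G$ and swap the order of summation, then split on $|F\cap G|\in\{k-1,k\}$. The two routes are just different bookkeeping for the same count and both handle the diagonal and the factor of $2$ cleanly.
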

\begin{proof}
\begin{eqnarray*}
{\rm co}_2(\mathcal F)=\sum_{E\in \binom{[n]}{k-1}}d(E)^2&=&\sum_{E\in \binom{[n]}{k-1}}d(E) + 2\sum_{E\in \binom{[n]}{k-1}}\binom {d(E)}2\\
&=& k|\cal F|+2\sum_{E\in \binom{[n]}{k-1}}|\{\{F,G\}:F,G\in \cal F, F\cap G=E\}|\\
&=& k|\cal F|+ 2|\{\{F,G\}:F,G\in \cal F, |F\cap G|=k-1\}|.
\end{eqnarray*}
\end{proof}
\qed

We now introduce the following pivotal lemma, which serves as the foundation for our application of the classic generating set method.

\begin{lemma}\label{compressed}
If $ \mathcal A \subseteq \binom{[n]}{k} $ is $t$-intersecting, then for every $1\le i<j\le n$, we have
\[ \zeta_{k-1}(\Delta_{ij}(\mathcal A))\ge \zeta_{k-1}(\mathcal A).\]
 \end{lemma}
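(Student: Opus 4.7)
The plan is to exploit the natural bijection $\Phi\colon \cal A\to\Delta_{ij}(\cal A)$ given by $\Phi(A)=\delta_{ij}(A)$, track how pairs at intersection size exactly $k-1$ behave under $\Phi$, and correct the small loss via an auxiliary injection. To this end, I would partition $\cal A$ into five classes according to how $i,j$ sit inside each member: $\cal A^{ij}$, $\cal A^i$, $\cal A^\emptyset$ (with the obvious meanings), and the members $A$ with $j\in A$, $i\notin A$ further split into $\cal A^j_S=\{A:(A\setminus\{j\})\cup\{i\}\notin\cal A\}$ and $\cal A^j_F$. The map $\Phi$ is the identity on the four fixed classes and sends $\cal A^j_S$ bijectively onto $(\cal A^j_S)':=\{(A\setminus\{j\})\cup\{i\}:A\in\cal A^j_S\}$, which is disjoint from $\cal A$ by construction, so $\Phi$ is a bijection.

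A short case analysis on a pair $\{F,G\}\subseteq\cal A$ with $|F\cap G|=k-1$ shows that $|\Phi(F)\cap\Phi(G)|=k-1$ in every case except one. When both $F,G\in\cal A^j_S$, then $j\in F\cap G$, and replacing $j$ by $i$ in both simultaneously preserves the intersection size. When exactly one of $F,G$, say $F$, lies in $\cal A^j_S$: the subcases ``partner in $\cal A^{ij}$'' and ``partner in $\cal A^\emptyset$'' preserve the size by direct computation; the subcase ``partner in $\cal A^i$'' is impossible since it would force $\Phi(F)=G\in\cal A$, contradicting $F\in\cal A^j_S$; and the only bad subcase is ``partner in $\cal A^j_F$'', in which the intersection drops to $k-2$.

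To compensate for the bad pairs, for each $\{F,G\}$ with $F\in\cal A^j_S$, $G\in\cal A^j_F$, $|F\cap G|=k-1$, I would associate the pair $\{F',G^*\}\subseteq \Delta_{ij}(\cal A)$, where $F'=\Phi(F)$ and $G^*=(G\setminus\{j\})\cup\{i\}\in\cal A^i$. A direct calculation gives $|F'\cap G^*|=k-1$, and since the $\Phi$-preimage $\{F,G^*\}$ satisfies $|F\cap G^*|=k-2$, this compensating pair lies outside the image of $\Phi$ on the $(k-1)$-intersection pairs of $\cal A$. The main obstacle is the injectivity of the compensation map; it follows from $(\cal A^j_S)'\cap\cal A^i=\emptyset$, which lets one identify $F'$ as the member of $\{F',G^*\}$ lying in $(\cal A^j_S)'$ and $G^*$ as the one lying in $\cal A^i$, so that $F$ and $G$ are uniquely recovered. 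Combining the two injections then yields $\zeta_{k-1}(\Delta_{ij}(\cal A))\ge \zeta_{k-1}(\cal A)$.
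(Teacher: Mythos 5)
Your proposal is correct and is essentially the same argument as the paper's: the paper's map $\varphi$ on ordered pairs realizes exactly your case analysis (its conditions (C1)--(C6) match your classification, its observation that a shifted $F$ cannot pair with $G\in\mathcal A^i$ is your "impossible" subcase, and its rule for (C3)/(C5) of shifting both coordinates is your compensation map $\{F,G\}\mapsto\{F',G^*\}$). The only cosmetic difference is that you phrase things with unordered pairs and separate the compensation into a second injection rather than folding everything into one definition of $\varphi$.
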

 \begin{proof} Let us denote $\Omega(\mathcal A)$ as the collection of ordered pairs of sets in  $\mathcal A$  that intersect in exactly $k-1$ elements, i.e.,
  \[\Omega(\mathcal A)=\{(F,G):F,G\in \cal A, |F\cap G|=k-1\}.\]
 We aim to demonstrate that  $|\Omega(\mathcal A)|\le |\Omega(\Delta_{ij}(\mathcal A))|$.

 Consider any  $F,G\in \mathcal A$ with $|F\cap G|=k-1$.
We examine how $F$ and $G$ change under the left-compression operation $\Delta_{ij}$.
 First, observe that if  $F\notin \Delta_{ij}(\mathcal A)$ and $G \in \Delta_{ij}(\mathcal A)$, then $\{i,j\}\cap G \neq \{i\}$.
 To see why, assume for contradiction that  $\{i,j\}\cap G=\{i\}$.
 Since $F\notin \Delta_{ij}(\mathcal A)$, it follows that  $j\in F, i\notin F$ and $F\cup \{i\}\setminus \{j\} \notin \cal A$.
 Combining with $|F\cap G|=k-1$ and $\{i,j\}\cap G=\{i\}$, we deduce that $G=F\cup \{i\}\setminus \{j\}\in \cal A$, which contradicts $F\cup \{i\}\setminus \{j\}\notin \cal A$.
 Similarly, if $G\notin \Delta_{ij}(\mathcal A)$ and $F \in \Delta_{ij}(\mathcal A)$, then $\{i,j\}\cap F\neq \{i\}$.
We now define the following conditions for  $F$ and $G$:

 (C1) $F\in \Delta_{ij}(\mathcal A)$, $G \in \Delta_{ij}(\mathcal A)$;

 (C2) $F\notin \Delta_{ij}(\mathcal A),G \in \Delta_{ij}(\mathcal A), \{i,j\}\subseteq G$ or $\{i,j\}\cap G=\emptyset$;

 (C3) $F\notin \Delta_{ij}(\mathcal A)$, $G \in \Delta_{ij}(\mathcal A)$ and $\{i,j\}\cap G=\{j\}$;

 (C4) $G\notin \Delta_{ij}(\mathcal A),F \in \Delta_{ij}(\mathcal A), \{i,j\}\subseteq F$ or $\{i,j\}\cap F=\emptyset$;

 (C5) $G\notin \Delta_{ij}(\mathcal A)$, $F \in \Delta_{ij}(\mathcal A)$ and $\{i,j\}\cap F=\{j\}$;

 (C6) $F\notin \Delta_{ij}(\mathcal A)$, $G \notin \Delta_{ij}(\mathcal A)$.

For each $(F,G)\in \Omega(\mathcal A)$, we define a mapping $\varphi((F,G))$ as follows:
 \[ \varphi((F,G))=
\begin{cases}
 (F,G), & \text{if $F,G$ satisfy (C1)};  \\
 (F\cup \{i\}\setminus \{j\},G), & \text{if $F,G$ satisfy (C2)};  \\
 (F,G\cup \{i\}\setminus \{j\}), & \text{if $F,G$ satisfy (C4)};  \\
(F\cup \{i\}\setminus \{j\},G\cup \{i\}\setminus \{j\}), & \text{if $F,G$ satisfy (C3) or (C5) or (C6)}.
\end{cases} \]
On the one hand, it is straightforward to verify that $\varphi(\Omega(\mathcal A))\subseteq \Omega(\Delta_{ij}(\mathcal A))$.
On the other hand, since for every pair $F,G\in \mathcal A$ with $|F\cap G|=k-1$ satisfy exactly one of the conditions (C1)--(C6), it is also clear that $\varphi$ is a injection.
Therefore, $\varphi$ is an injection from $\Omega(\mathcal A)$ to $\Omega(\Delta_{ij}(\mathcal A))$.
Consequently,  we have  $|\Omega(\mathcal A)|\le |\Omega(\Delta_{ij}(\mathcal A))|$,  completing the proof.
\qed
 \end{proof}

The following corollary is a consequence of Lemma \ref{transform}, Lemma \ref{compressed} and the fact $|\Delta_{ij}(\mathcal A)|=|\mathcal A|$.
\begin{coro}\label{compressed0}
If $ \mathcal A \subseteq \binom{[n]}{k} $ is $t$-intersecting, then for every $1\le i<j\le n$, we have
\[ {\rm co}_2(\Delta_{ij}(\mathcal A))\ge {\rm co}_2(\mathcal A).\]
In particular, there exists a left-compressed $t$-intersecting $ \mathcal B \subseteq \binom{[n]}{k} $ such that ${\rm co}_2(\mathcal B)\ge {\rm co}_2(\mathcal A)$.
 \end{coro}

\begin{lemma}[Ahlswede--Khachatrian, \cite{AK1996}]\label{maximality}
Let $t,k,n$ be positive integers such that $2\le t\leq k\leq n$.
Then there exists a left-compressed, non-trivial t-intersecting family $\cal F \subseteq \binom{[n]}{k}$ that attains the maximal possible size.
 \end{lemma}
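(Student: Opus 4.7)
The plan is to combine a minimality argument under shifting with the Hilton--Milner theorem. Let $M$ denote the maximum size of a non-trivial $t$-intersecting subfamily of $\binom{[n]}{k}$ (the case $k=t$ is vacuous, so assume $k\ge t+1$). Since $\mathcal H(n,k,t)$ is non-trivial and $t$-intersecting, we already know
\[
M\ge |\mathcal H(n,k,t)|=\binom{n-t}{k-t}-\binom{n-k-1}{k-t}+t.
\]
Among all non-trivial $t$-intersecting families of size $M$, choose $\mathcal F$ that minimizes $\sigma(\mathcal F):=\sum_{F\in \mathcal F}\sum_{x\in F}x$. Suppose toward a contradiction that $\mathcal F$ is not left-compressed, so there exist $1\le i<j\le n$ with $\mathcal G:=\Delta_{ij}(\mathcal F)\ne\mathcal F$. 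Then $\mathcal G$ is $t$-intersecting, $|\mathcal G|=M$, and $\sigma(\mathcal G)<\sigma(\mathcal F)$; by the choice of $\mathcal F$, $\mathcal G$ must be trivial, so fix a common $t$-core $T$ of $\mathcal G$.

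The next step is a short structural analysis that produces a $(t-1)$-core of $\mathcal F$. For each $F\in\mathcal F$, $T\subseteq \delta_{ij}(F)$; if $F$ is moved by the shift then $\delta_{ij}(F)=F\cup\{i\}\setminus\{j\}$ omits $j$, forcing $j\notin T$. Non-triviality of $\mathcal F$ supplies an $F^{\ast}\in\mathcal F$ with $T\not\subseteq F^{\ast}$; this $F^{\ast}$ cannot be fixed by the shift (else $T\subseteq F^{\ast}$), so $T\subseteq F^{\ast}\cup\{i\}\setminus\{j\}$, and combining $T\subseteq F^{\ast}\cup\{i\}$ with $T\not\subseteq F^{\ast}$ forces $i\in T$. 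Set $T':=T\setminus\{i\}$. A direct check in both cases (moved or fixed) now gives $T'\subseteq F$ for every $F\in\mathcal F$, so $T'$ is a $(t-1)$-core of $\mathcal F$.

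The final step reduces to Hilton--Milner. The map $F\mapsto F\setminus T'$ is a bijection from $\mathcal F$ onto a subfamily $\mathcal F'\subseteq\binom{[n]\setminus T'}{k-t+1}$; the $t$-intersecting property of $\mathcal F$ translates into $\mathcal F'$ being intersecting, and any common element of $\mathcal F'$ would upgrade $T'$ to a $t$-core of $\mathcal F$, so $\mathcal F'$ is non-trivial. The hypothesis $n\ge(t+1)(k-t+1)$ yields $n-t+1\ge 2(k-t+1)$, since their difference is $(t-1)(k-t)\ge 0$, so Hilton--Milner on the ground set $[n]\setminus T'$ (of size $n-t+1$) gives
\[
|\mathcal F|=|\mathcal F'|\le \binom{n-t}{k-t}-\binom{n-k-1}{k-t}+1.
\]
Since $t\ge 2$, this is strictly smaller than $|\mathcal H(n,k,t)|\le M=|\mathcal F|$, a contradiction. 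The main obstacle is the structural step identifying $T'$: it requires carefully tracking the fixed versus moved sets of the shift and using non-triviality to pin down both $j\notin T$ and $i\in T$. Once $T'$ is in hand, everything reduces to a clean size comparison between $\mathcal H(n,k,t)$ and the Hilton--Milner bound, and the gap of exactly $t-1$ between them is precisely what rules out a non-trivial $t$-intersecting family with a $(t-1)$-core attaining $M$.
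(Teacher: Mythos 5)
The paper does not prove this lemma at all — it is cited directly to Ahlswede--Khachatrian \cite{AK1996} — so there is no in-paper argument to compare against, only the external reference. Your proof is a genuine, self-contained argument, and its core mechanism (minimize the weight $\sigma$, observe that a destabilizing shift $\Delta_{ij}$ must produce a trivial family, extract a $(t-1)$-core $T'$ of $\mathcal F$ by tracking $i\in T$ and $j\notin T$, pass to the link $\mathcal F'\subseteq\binom{[n]\setminus T'}{k-t+1}$ which is a nontrivial intersecting family, and invoke Hilton--Milner to beat $|\mathcal H(n,k,t)|$) is correct step by step. The deduction $i\in T$, $j\notin T$ is careful and the $(t-1)$-core claim checks out in both the fixed and moved cases; the translation to the link and the size comparison $\binom{n-t}{k-t}-\binom{n-k-1}{k-t}+1 < \binom{n-t}{k-t}-\binom{n-k-1}{k-t}+t$ for $t\ge 2$ is exactly the needed contradiction. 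This is a more modular route than a bare-hands structural argument: it outsources the hard size bound to Hilton--Milner.

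There is, however, a genuine mismatch with the statement as written. The lemma in the paper carries only the hypothesis $2\le t\le k\le n$, with no lower bound on $n$, but your proof explicitly invokes $n\ge(t+1)(k-t+1)$ to make Hilton--Milner applicable on the link (you actually only need the weaker $n\ge 2k-t+1$, since that is exactly $n-t+1\ge 2(k-t+1)$, and that same bound is what makes $\mathcal H(n,k,t)$ a valid nontrivial $t$-intersecting witness for the lower bound on $M$). The proposal as written silently assumes a hypothesis not present in the lemma; in the complementary range $n\le 2k-t$ every $k$-family is automatically $t$-intersecting, so the extremal family is $\binom{[n]}{k}$ itself (left-compressed, and nontrivial whenever any nontrivial family exists), and that case should be dispatched explicitly to match the lemma as stated. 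Since the paper only ever applies the lemma (via Corollary \ref{compressed1}) under $n\ge(t+1)(k-t+1)$, the omission is harmless for the paper's purposes, but a complete proof of Lemma \ref{maximality} as phrased should either add the easy small-$n$ case or note that the hypothesis on $n$ should be carried into the statement.
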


 As new sets are added to  $\cal F$, the codegree squared sum of the resulting family will not decrease.  The following corollary is a direct consequence of Lemma \ref{compressed} and Lemma \ref{maximality}
\begin{coro}\label{compressed1}
Let $t,k,n$ be positive integers such that $2\le t\leq k\leq n$.
Then there exists a left-compressed non-trivial $t$-intersecting family $\cal F \subseteq \binom{[n]}{k}$  with maximality of codegree squared sum.
 \end{coro}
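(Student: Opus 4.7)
The plan is to adapt the argument that establishes Lemma~\ref{maximality} by replacing the role of the cardinality with that of the codegree squared sum. This substitution is natural: Corollary~\ref{compressed0} shows that ${\rm co}_2$ behaves under shifts exactly as $|\cdot|$ does, namely it does not decrease when $\Delta_{ij}$ is applied to a $t$-intersecting family; moreover, adjoining a set to a non-trivial $t$-intersecting family preserves both non-triviality and the $t$-intersecting property while weakly increasing every codegree, and hence ${\rm co}_2$.

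Let $M$ denote the maximum of ${\rm co}_2(\mathcal F)$ over all non-trivial $t$-intersecting families $\mathcal F\subseteq\binom{[n]}{k}$, and among the families attaining $M$ choose $\mathcal F_0$ so as to minimise the weight $w(\mathcal F)=\sum_{F\in\mathcal F}\sum_{i\in F} i$. I claim $\mathcal F_0$ is left-compressed. If not, there exist $1\le i<j\le n$ with $\Delta_{ij}(\mathcal F_0)\ne \mathcal F_0$. Then $\Delta_{ij}(\mathcal F_0)$ is $t$-intersecting of the same cardinality as $\mathcal F_0$, satisfies ${\rm co}_2(\Delta_{ij}(\mathcal F_0))\ge M$ by Corollary~\ref{compressed0}, and has $w(\Delta_{ij}(\mathcal F_0))<w(\mathcal F_0)$. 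If $\Delta_{ij}(\mathcal F_0)$ is still non-trivial, then it is a non-trivial $t$-intersecting family attaining ${\rm co}_2=M$ with strictly smaller weight, contradicting the choice of $\mathcal F_0$.

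The main obstacle is the remaining case, in which $\Delta_{ij}(\mathcal F_0)$ becomes trivial. A short inspection of the definition of $\delta_{ij}$ forces $i\in T$ and $j\notin T$, where $T$ is the common $t$-set of $\Delta_{ij}(\mathcal F_0)$; every member of $\mathcal F_0$ then contains either $T$ or $T'=(T\setminus\{i\})\cup\{j\}$, so $\mathcal F_0$ is contained in the union of two $t$-stars sharing a common $(t-1)$-set. Working within this restricted two-star structure, and using the assumption $n\ge(t+1)(k-t+1)$, I would invoke Lemma~\ref{maximality} (after relabelling so that $T=[t]$) to produce a left-compressed non-trivial $t$-intersecting family $\mathcal F^{\star}$ and compare codegree squared sums directly, showing ${\rm co}_2(\mathcal F^{\star})\ge {\rm co}_2(\mathcal F_0)=M$. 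Since $\mathcal F^{\star}$ is left-compressed and non-trivial by construction, it witnesses the conclusion of the corollary.
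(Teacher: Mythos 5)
Your reduction is set up correctly: the weight-minimization argument together with Corollary~\ref{compressed0} handles every shift that leaves the family non-trivial, and your structural analysis of the problematic case is right — if $\Delta_{ij}(\mathcal F_0)$ becomes trivial with common $t$-set $T$, then $i\in T$, $j\notin T$, and every member of $\mathcal F_0$ contains $T$ or $T'=(T\setminus\{i\})\cup\{j\}$. This is exactly the subtle point that makes the corollary non-obvious. However, your resolution of that case is not a proof. You say you ``would invoke Lemma~\ref{maximality}\dots and compare codegree squared sums directly, showing ${\rm co}_2(\mathcal F^{\star})\ge {\rm co}_2(\mathcal F_0)=M$,'' but no such comparison is carried out, and it is unclear how it could be made non-circular: Lemma~\ref{maximality} gives a left-compressed non-trivial family $\mathcal F^{\star}$ of \emph{maximal cardinality}, and the implication ``maximal cardinality $\Rightarrow$ ${\rm co}_2$ at least $M$'' is essentially the statement of Theorem~\ref{maintheo3}, which the corollary is supposed to help prove. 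You also quietly import the hypothesis $n\ge(t+1)(k-t+1)$, which is not among the assumptions of Corollary~\ref{compressed1} (nor of Lemma~\ref{maximality} as stated in the paper).

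The paper itself dispatches this corollary in one line as ``a direct consequence of Lemma~\ref{compressed} and Lemma~\ref{maximality}'' together with the observation that enlarging $\mathcal F$ cannot decrease ${\rm co}_2$; it does not spell out how the trivializing shift is handled. So you have identified a genuine subtlety that the paper glosses over, but the final step of your argument is only a plan, not a proof, and as stated it would be circular. To close the gap you would need an argument that stays inside the two-star configuration $\mathcal U(T)\cup\mathcal U(T')$ and either shows the offending shift cannot occur (e.g.\ after passing to an inclusion-maximal $\mathcal F_0$) or directly exhibits a left-compressed non-trivial family with at least the same ${\rm co}_2$, without appealing to the optimality results proved later.
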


\section{The generating set}

For a subset $E$ of $[n]$, we define $\mathcal{U}(E)=\{A\subseteq [n]: E\subseteq A \}$ and $s^+(E)=\max\{i: i\in E\}$. For $\mathcal{E}\subseteq 2^{[n]}$, we set $\mathcal{U}(\mathcal{E})=\cup_{E\in \mathcal{E}}\mathcal{U}(E)$ and $s^+(\mathcal E)=\max\{s^+(E): E\in\mathcal E\}$ accordingly. Let $\mathcal{F}\subseteq\binom{[n]}{k}$. We say that a family  $g(\mathcal{F})\subseteq \bigcup_{i\leq k}\binom{[n]}{i}$   is a \emph{generating set} of $\mathcal{F}$ if $\mathcal{U}(g(\mathcal F))\cap  \binom{[n]}{k}=\mathcal{F}$. It is clear that $\mathcal{F}$ is a generating set of itself. The set of all generating sets of $\mathcal{F}$ forms a nonempty  collection, which we denote by $G(\mathcal{F})$. The concept  of generating sets of a $k$-uniform family was first introduced in \cite{AK}. For $g(\mathcal F)\in G(\mathcal F)$, let $g_*(\mathcal F)$ be the set of all minimal (in the sense of set-theoretical inclusion) elements of $g(\mathcal F)$.
Thus $g_*(\cal F)$ forms an anti-chain under the inclusion relation.
We denote $s=\min \{ s^+(g(\mathcal F)): g(\mathcal F) \in G(\mathcal F) \}$, $\cal F|_{[s]}=\{F\cap [s]: F\in \cal F\}$ and $G_*(\mathcal F)=\{g(\mathcal F)\in G(\mathcal F): g(\mathcal F)=g_*(\mathcal F), s^+(g(\mathcal F))=s\}$.
Note that $\cal F|_{[s]} \in G(\mathcal F)$, and $g(\cal F)\subseteq \cal F|_{[s]}$ for all $g(\cal F)\in G_*(\mathcal F)$.
For $g(\mathcal F)\in G_*(\mathcal F)$,  set $g^*(\mathcal F)=\{E\in g(\mathcal F): s \in E\}$, $g^*_i(\mathcal F)=\{E\in g^*(\mathcal F): |E|=i\}$  and $g^*_i(\mathcal F)'=\{E\setminus\{s\} : E\in g^*_i(\mathcal F)\}$ for $t\leq i\leq s$. 
For $E\in g(\mathcal F)$, set
\[\mathcal D(E)=\{B\in\binom{[n]}{k}: B\cap [s]=E\}\:{\rm and } \: \mathscr D(E)=\{B\in\binom{[n]}{k}: B\cap [s^+(E)]=E\}.\]
We remark that $\mathcal D(E) \subseteq \mathscr D(E)$, and $\mathcal D(E) = \mathscr D(E)$ only if $s\in E$.
For  $\mathcal E \subseteq g(\mathcal F)$,
denote $\mathcal D(\mathcal E)=\cup_{E\in \mathcal E}\mathcal D(E)$ and $\mathscr D(\mathcal E)=\cup_{E\in \mathcal E}\mathscr D(E)$.
To gain some intuitive understanding of these definitions, let's consider a simple example of \[ \mathcal F=\cal A(n, k, t)=\{F\subseteq \binom{[n]}{k}: |F\cap [t+2]|\ge t+1\}.\]
We have $g(\mathcal F)=\binom{[t+2]}{t+1}$, $s=t+2$, $g^*_{t+1}(\mathcal F)=\{F\cup \{t+2\}: F\in \binom{[t+1]}{t}\}$, $g^*_i(\mathcal F)=\emptyset$ for $i\neq t+1$, $g^*_{t+1}(\mathcal F)'=\binom{[t+1]}{t}$, $\mathcal D([t+1])=\{B\in\binom{[n]}{k}: B\cap [t+2]=[t+1]\} $ and $ \mathscr D([t+1])=\{B\in\binom{[n]}{k}: [t+1]\subset B\}$.

For an intersecting family  $\mathcal{F}\subseteq\binom{[n]}{k}$,
we say  $\cal F$ is maximal if $\cal F \cup \{E\}$ is not intersecting for any $E\in \binom{[n]}{k} \setminus \cal F$.
From \cite{AK}, we know that the generating sets  possess  the following properties.

\begin{lemma}[\cite{AK}]\label{gtf} 
Let $n,k$ and $t$ be positive integers with $k \ge t$ and $n>2k-t$.
Let $\mathcal F$ be a maximal left-compressed $t$-intersecting subfamily of $\binom{[n]}{k}$, $g(\mathcal F)\in G_*(\mathcal F)$ and  $s=s^+(g(\mathcal F))$ such that $s$ is minimum. Then the following five statements hold.

  {\rm (i)}  $|E_1\cap E_2|\geq t$ holds for all $E_1,E_2\in g(\mathcal F)$.

  {\rm (ii)} For $1\leq i<j\leq s$  and $E\in g(\mathcal F)$, we have  $F\subseteq \delta_{ij}(E)$ for some $F\in g(\mathcal F)$.

  {\rm(iii)} $\mathcal F$ is a disjoint union \[\mathcal F=\bigcup_{E\in g(\mathcal F)}\mathscr D(E).\]
                   
  {\rm(iv)} If $g^*_i(\mathcal F)\neq \emptyset$, then $g^*_{s+t-i}(\mathcal F)\neq \emptyset$ and for any $E_1\in g^*_i(\mathcal F)$, there exists $E_2\in g^*_{s+t-i}(\mathcal F)$ with $|E_1\cap E_2|=t$ and $E_1\cup E_2=[s]$, which yields $|E_1|+|E_2|=s+t$.

{\rm(v)} If  $g^*_i(\mathcal F)\neq\emptyset$ with $i\neq \frac{s+t}{2}$, then $\mathcal F_1=\mathcal F\cup \mathcal D(g^{*}_i(\mathcal F)')\backslash \mathcal D(g^{*}_{s+t-i}(\mathcal F))$ is also a $t$-intersecting subfamily of $\binom{[n]}{k}$ with
\[|\mathcal F_1|=|\mathcal F|+|g^{*}_i(\mathcal F)|\binom{n-s}{k-i+1}-|g^{*}_{s+t-i}(\mathcal F)|\binom{n-s}{k+i-s-t}.\]
 \end{lemma}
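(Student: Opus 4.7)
The plan is to prove the five parts in the natural logical order (i)$\to$(ii)$\to$(iii)$\to$(iv)$\to$(v), using four ingredients throughout: the antichain property of $g(\mathcal F)=g_*(\mathcal F)$, the minimality $s^+(g(\mathcal F))=s$ among all generating sets, the left-compressedness of $\mathcal F$, and the $t$-intersection hypothesis. The key identity $\mathcal U(g(\mathcal F))\cap\binom{[n]}{k}=\mathcal F$ will be used constantly.

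For (i), I would argue by contradiction. Suppose $|E_1\cap E_2|\le t-1$ for some $E_1,E_2\in g(\mathcal F)$. Since $\mathcal U(E_i)\cap\binom{[n]}{k}\subseteq\mathcal F$, I extend $E_1,E_2$ to $k$-sets $F_1,F_2\in\mathcal F$ with $F_i\setminus E_i\subseteq[n]\setminus(E_1\cup E_2)$; the bound $n>2k-t$ leaves enough free vertices to realize these disjoint extensions. Then $F_1\cap F_2=E_1\cap E_2$ has size less than $t$, contradicting $t$-intersection. For (ii), take $E\in g(\mathcal F)$ with $j\in E$, $i\notin E$ (otherwise $\delta_{ij}(E)=E$); extend $E$ to $B\in\mathscr D(E)$ with $B\setminus E\subseteq[n]\setminus[\ell]$. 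Left-compressedness gives $B':=(B\setminus\{j\})\cup\{i\}\in\mathcal F$, and $B'\cap[\ell]=\delta_{ij}(E)$. Any $F\in g(\mathcal F)$ with $F\subseteq B'$ satisfies $s^+(F)\le\ell$, hence $F\subseteq B'\cap[\ell]=\delta_{ij}(E)$, giving the stated dichotomy.

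For (iii), the inclusion $\mathscr D(E)\subseteq\mathcal F$ is immediate from $\mathcal U(E)\cap\binom{[n]}{k}\subseteq\mathcal F$. Conversely, for $B\in\mathcal F$ pick $E\in g(\mathcal F)$ with $E\subseteq B$ and $s^+(E)$ smallest; if $B\cap[s^+(E)]\supsetneq E$, then some $x\in B\cap[s^+(E)]\setminus E$ makes $\delta_{x,s^+(E)}(E)\subseteq B$, and (ii) produces an element of $g(\mathcal F)$ contained in $B$ with strictly smaller $s^+$, contradicting minimality. Disjointness follows from the antichain property: if $B\in\mathscr D(E_1)\cap\mathscr D(E_2)$ with $s^+(E_1)\le s^+(E_2)$, then $E_1=B\cap[s^+(E_1)]\subseteq B\cap[s^+(E_2)]=E_2$, forcing $E_1=E_2$.

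Parts (iv) and (v) exploit the maximality of $\mathcal F$. For (iv), given $E_1\in g^*_i(\mathcal F)$, I would consider replacing $E_1$ by $E_1\setminus\{\ell\}$ in the generating set; using large $n$ to realize worst-case intersections, the augmented family is $t$-intersecting if and only if $|(E_1\setminus\{\ell\})\cap E_2|\ge t$ for all $E_2\in g(\mathcal F)$. This must fail by maximality, producing $E_2\in g^*(\mathcal F)$ with $|E_1\cap E_2|=t$ and $\ell\in E_2$. To pin down $E_1\cup E_2=[\ell]$ and $|E_2|=\ell+t-i$, I iterate the same argument on $x\in[\ell]\setminus(E_1\cup E_2)$ using (ii), replacing $E_2$ by a shift whose $s^+$ is smaller if the union is proper. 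For (v), the cardinality formula follows from $|\mathcal D(E)|=\binom{n-\ell}{k-|E|}$, and $t$-intersection of $\mathcal F_1$ is verified by checking that every newly added $B\in\mathcal D(E_1\setminus\{\ell\})$ intersects each surviving element in at least $t$ elements, exactly because the potential witnesses $E_2\in g^*_{\ell+t-i}(\mathcal F)$ are precisely the ones removed. The principal obstacle is the tightness of (iv), namely showing both $|E_2|=\ell+t-i$ and $E_1\cup E_2=[\ell]$: this needs a careful iteration of the maximality argument together with (ii), whereas (i)--(iii) reduce to elementary antichain and compression arguments and (v) is a direct calculation.
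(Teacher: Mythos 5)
The paper does not prove Lemma~\ref{gtf}; it is quoted from Ahlswede--Khachatrian \cite{AK} and used as a black box, so there is no in-paper proof to compare against. Judged on its own, your attempt is correct for (i), for (iii), and for the overall structure of (iv), but there are two genuine gaps.

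For (ii) you need a $k$-set $B$ with $B\cap[\ell]=E$, not merely $B\cap[s^+(E)]=E$; only then does $B'=\delta_{ij}(B)$ satisfy $B'\cap[\ell]=\delta_{ij}(E)$, which is what forces every generator contained in $B'$ to lie inside $\delta_{ij}(E)$. Such a $B$ exists iff $n-\ell\geq k-|E|$, which you never establish, and which is not automatic when $s^+(E)<\ell$. If you only take $B\in\mathscr D(E)$ and it happens that $B\cap(s^+(E),\ell]\neq\emptyset$, the generator $F$ with $B'\in\mathscr D(F)$ can satisfy $F\supsetneq\delta_{ij}(E)$, and the stated dichotomy does not follow. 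Some extra argument controlling $\ell$ or the choice of $B$ is required here.

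The claim in (v) that $\mathcal F_1$ is $t$-intersecting cannot be established by your heuristic, and is in fact false in the self-dual case $2i=\ell+t$. Take $\mathcal F=\mathcal A(n,k,t)$ (left-compressed, $t$-intersecting, and of maximum size when $n=(t+1)(k-t+1)$): then $\ell=t+2$, $i=\ell+t-i=t+1$, and $g^{*}_{t+1}(\mathcal F)'=\binom{[t+1]}{t}$. Two sets $B_1,B_2\in\mathcal D(g^{*}_{t+1}(\mathcal F)')$ whose traces on $[t+1]$ are distinct $t$-subsets and which are disjoint on $(t+2,n]$ (possible once $n\geq 2k-t+2$) satisfy $|B_1\cap B_2|=t-1<t$, so $\mathcal F_1$ is not $t$-intersecting. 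The step as AK actually use it, and the only way the present paper invokes it (Case~I of the proof of Theorem~\ref{maintheo1}), requires $i\neq\ell+t-i$; when $i=\ell+t-i$ the paper switches to replacing $g^{*}_i(\mathcal F)$ by a proper subset $f_q$ via Lemma~\ref{norm2}. Your remark that ``the potential witnesses are precisely the ones removed'' does not detect this restriction, and even for $i<\ell+t-i$ it has to be turned into a proof by showing, via (ii) and (iv), that every $E_2\in g^{*}(\mathcal F)$ with $|E_1\cap E_2|=t$ satisfies $E_1\cup E_2=[\ell]$ and hence lies in $g^{*}_{\ell+t-i}(\mathcal F)$ --- which is exactly the iteration you sketch for (iv), so the pieces are there but the verification is missing.
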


\medskip Given families $\cal F, \cal G\subseteq 2^{[n]}$ and integers  $1\le u,v \le n$, we  define
\[\zeta_{u}(\cal F,\cal G)=|\{\{F,G\}:F \in \cal F, G\in \cal G, |F|=|G|=u+1, |F\cap G|=u\}|\]
and
\[\zeta_{u,v}(\cal F,\cal G)=|\{\{F,G\}\in \zeta_{u}(\cal F,\cal G), v\in F\cap G \}|.\]
For simplicity, we denote $\zeta_{u}(\cal F)=\zeta_{u}(\cal F, \cal F)$.
We now present the following useful lemma.

\begin{lemma}\label{norm}
Let $\mathcal F\subseteq \binom{[n]}{k}$ be a left-compressed $t$-intersecting family, $g(\mathcal F)\in G_*(\mathcal F)$ and  $s=s^+(g(\mathcal F))$.
Suppose that  $g^*_i(\mathcal F)\neq\emptyset$ and $\mathcal F_1=\mathcal F\cup\mathscr D(g^{*}_i(\mathcal F)')\backslash \mathscr D(g^{*}_{j}(\mathcal F))$, where $j=s+t-i\neq i$.
Then
\begin{small}
\begin{eqnarray*}
\zeta_{k-1}(\cal F_1)-\zeta_{k-1}(\cal F)&\ge& |g_{i}^{*}(\mathcal F)|\binom{n-s}{k-i}\binom{n-s-k+i}2
+\zeta_{i-1,s}(g^{*}_{i}(\mathcal F),\cal F|_{[s]}) \binom{n-s}{k-i+1}+\\
&&|g_{i}^{*}(\mathcal F)|(s-i+1)(k-i+1)\binom{n-s}{k-i+1}
-|g_{j}^{*}(\mathcal F)|\binom{n-s}{k-j-1}\binom{n-s-k+j+1}2\\
&&-\zeta_{j-1,s}(g^{*}_{j}(\mathcal F),\cal F|_{[s]}) \binom{n-s}{k-j}
-|g^{*}_{j}(\mathcal F)|(s-j) (k-j+1)\binom{n-s}{k-j}.
\end{eqnarray*}
\end{small}
 \end{lemma}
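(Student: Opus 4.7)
The plan is to evaluate the change $\zeta_{k-1}(\mathcal F_1) - \zeta_{k-1}(\mathcal F)$ by carefully tracking which $\cal P_2$-copies are created or destroyed under the swap $\mathcal F \mapsto \mathcal F_1$. Write $\mathcal A := \mathcal D(g^{*}_i(\mathcal F)')$ for the added family and $\mathcal B := \mathcal D(g^{*}_j(\mathcal F))$ for the removed family. By Lemma \ref{gtf}(iii), $\mathcal B \subseteq \mathcal F$; and since every member of $\mathcal A$ avoids $s$ while every member of $\mathcal B$ contains $s$, we have $\mathcal A \cap \mathcal B = \emptyset$ and $\mathcal F_1 = (\mathcal F \setminus \mathcal B) \cup \mathcal A$. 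Therefore
\[\zeta_{k-1}(\mathcal F_1) - \zeta_{k-1}(\mathcal F) \geq \Delta^{+} - \Delta^{-},\]
where $\Delta^{+}$ is a lower bound on the $\cal P_2$-copies in $\mathcal F_1$ with at least one endpoint in $\mathcal A$, and $\Delta^{-}$ is an upper bound on the $\cal P_2$-copies in $\mathcal F$ with at least one endpoint in $\mathcal B$. The three positive terms on the right-hand side of the stated inequality will be a lower estimate for $\Delta^{+}$ and the three negative terms an upper estimate for $\Delta^{-}$.

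To lower-bound $\Delta^{+}$, I would partition contributing pairs into three disjoint classes keyed to the projections onto $[s]$. \emph{Class I}: both endpoints $F_1,F_2$ lie in the same $\mathcal D(E')$ for some $E' \in g^{*}_i(\mathcal F)'$; such a pair is determined by a common $(k-i)$-tail in $[s+1,n]$ together with two further distinct elements, which for fixed $E'$ produces $\binom{n-s}{k-i}\binom{n-s-k+i}{2}$ pairs, yielding the first term. \emph{Class II}: the $[s]$-projections of the endpoints are two distinct $i$-sets sharing $i-1$ elements including $s$, with one projection in $g^{*}_i(\mathcal F)$ and the other in $\cal F|_{[s]}\setminus g^{*}_i(\mathcal F)$; each such projection-pair admits $\binom{n-s}{k-i+1}$ common-tail extensions, producing the second term. \emph{Class III}: $F_1 \in \mathcal D(E_1')$ for some $E_1 \in g^{*}_i(\mathcal F)$ and $F_2 = (F_1 \setminus \{y\}) \cup \{x\}$ with $y \in F_1 \cap [s+1,n]$ and $x \in [s] \setminus E_1'$; here the auxiliary set $(F_1 \setminus \{y\}) \cup \{s\}$ belongs to $\mathscr D(E_1) \subseteq \mathcal F$, and the left-compressed property applied to the $(x,s)$-swap on this auxiliary set certifies $F_2 \in \mathcal F$, while $|F_2 \cap [s]| = i \neq j$ forces $F_2 \notin \mathcal B$; enumerating over $(E_1, F_1, y, x)$ gives the third term. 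Disjointness among the classes follows by tracking the sizes of the $[s]$-projections of the endpoints.

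A symmetric analysis on $\mathcal B$ in $\mathcal F$ yields the three negative terms: pairs within a single $\mathscr D(E)$ for $E \in g^{*}_j(\mathcal F)$ contribute Term 4; pairs whose $[s]$-projections share $j-1$ elements including $s$, with one projection in $g^{*}_j(\mathcal F)$ and the other in $\cal F|_{[s]}$ (taken in full, since an upper bound must absorb pairs internal to $\mathcal B$ as well as pairs crossing into $\mathcal F \setminus \mathcal B$), contribute Term 5; and swap pairs obtained by exchanging $y \in F_1 \cap [s+1,n]$ with $x \in [s-1] \setminus E$ (together with the analogous $y = s$ exchange) give Term 6. The principal obstacle is the combinatorial bookkeeping: ensuring disjointness among the three positive classes (so they genuinely lower-bound $\Delta^{+}$), completeness among the three negative classes (so they genuinely upper-bound $\Delta^{-}$), and invoking the disjoint decomposition of Lemma \ref{gtf}(iii), the antichain property of $g(\mathcal F)$, and left-compressedness to guarantee that every auxiliary set produced by a swap in fact lies in the appropriate family.
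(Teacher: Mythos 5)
Your proposal is correct and takes essentially the same approach as the paper's proof: you partition the contributing pairs by the sizes and relations of their $[s]$-projections, certify membership via the antichain property of $g(\mathcal F)$, Lemma~\ref{gtf}(iii), and left-compressedness, and your three positive and three negative classes reproduce exactly the paper's case counts (your Class~III merges the paper's Case~2 with the $s\in F_2$ branch of Subcase~1.1, and your negative terms match its Cases~1--2). The only blemish is a terminological slip in Class~II: the two $i$-sets sharing $i-1$ elements including $s$ are not the $[s]$-projections of the endpoints (those have size $i-1$ and avoid $s$) but rather those projections with $s$ reattached, exactly as the paper treats them.
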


 \begin{proof}
On the one hand, we consider the  cases $\{F_1,F_2\}$ that will increase the value of $\zeta_{k-1}(\cal F_1)-\zeta_{k-1}(\cal F)$. In such cases, at least one of $F_1$ or $F_2$ must be  contained in $g^{*}_{i}(\mathcal F)'\times \binom{[s+1,n]}{k-i+1}$.
Without loss of generality assume that $F_1 \in g^{*}_{i}(\mathcal F)'\times \binom{[s+1,n]}{k-i+1}$.
Since $g(\cal F)$ is an anti-chain under the inclusion relation, $F_2\cap [s-1]$ can not be a proper subset of $F_1\cap [s-1]$. Therefore it must be either $|F_2\cap [s-1]|=i-1$ or $|F_2\cap [s-1]|=i$.

Case 1:  $|F_2\cap [s-1]|=i-1$. Then it must be either $F_1\cap [s-1]=F_2\cap [s-1]$ or $|(F_1\cap [s-1]) \cap (F_2\cap [s-1])|=i-2$.

Subcase 1.1: $F_1\cap [s-1]=F_2\cap [s-1]$. In this case,  $|(F_1\cap [s+1,n]) \cap (F_2\cap [s+1,n])|=k-i$, The number of such $F_1\cap [s-1]$ is $|g_{i}^{*}(\mathcal F)'|=|g_{i}^{*}(\mathcal F)|$.
If $s\in F_2$, then $F_2\cap [s+1,n]$ is a proper subset of $ F_1\cap [s+1,n]$ with $|F_1\cap [s+1,n]|=|F_2\cap [s+1,n]|+1$. Thus, the number of such $\{F_1\cap [s+1,n], F_2\cap [s+1,n]\}$ is $\binom{n-s}{k-i+1}(k-i+1)$.
Otherwise $s\notin F_2$, so $\{F_1\cap [s+1,n], F_2\cap [s+1,n]\}\subseteq \binom{[s+1,n]}{k-i+1}$. Thus the number of such $\{F_1\cap [s+1,n], F_2\cap [s+1,n]\}$ is $\binom{n-s}{k-i}\binom{n-s-k+i}2$.
Therefore, the total number of such $\{F_1, F_2\}$ with $F_1\cap [s-1]=F_2\cap [s-1]$  is
\begin{eqnarray}\label{no1}
|g_{i}^{*}(\mathcal F)|(\binom{n-s}{k-i+1}(k-i+1)+\binom{n-s}{k-i}\binom{n-s-k+i}2).
\end{eqnarray}

Subcase 1.2: $|(F_1\cap [s-1]) \cap (F_2\cap [s-1])|=i-2$.
Note that there exist some $G\in g(\cal F)$ such that $G\subseteq F_2\cap [s-1]$, implying $(F_2\cap [s-1])\cup \{s\} \in \cal F|_{[s]}$.
 Additionally,  $(F_1\cap [s-1])\cup \{s\} \in g_{i}^{*}(\mathcal F)$, implying $|((F_1\cap [s-1])\cup \{s\}) \cap ((F_2\cap [s-1])\cup \{s\})|=i-1$.
Thus the number of such $\{F_1\cap [s-1],F_2\cap [s-1]\}$ with $|(F_1\cap [s-1]) \cap (F_2\cap [s-1])|=i-2$ is at least $\zeta_{i-1,s}(g_{i}^{*}(\mathcal F),\cal F|_{[s]})$.
Furthermore, $|(F_1\cap [s-1]) \cap (F_2\cap [s-1])|=i-2$ implies $F_1\cap [s+1,n] = F_2\cap [s,n] \subseteq \binom{[s+1,n]}{k-i+1}$, and the number of such pairs is $\binom{n-s}{k-i+1}$.
Hence the number of such $\{F_1, F_2\}$ with $|(F_1\cap [s-1]) \cap (F_2\cap [s-1])|=i-2$ is at least
\begin{eqnarray}\label{no2}
\zeta_{i-1,s}(g_{i}^{*}(\mathcal F),\cal F|_{[s]})\binom{n-s}{k-i+1}.
\end{eqnarray}

Case 2: $|F_2\cap [s-1]|=i$. it must be $F_1\cap [s-1] \subseteq F_2\cap [s-1]$ with $|F_2\cap [s-1]|=|F_1\cap [s-1])|+1$.
Thus, the number of such $\{F_1\cap [s-1], F_2\cap [s-1]\}$  is  $|g_{i}^{*}(\mathcal F)'|((s-1)-(i-1))=|g_{i}^{*}(\mathcal F)|(s-i)$.
Additionally, $|F_2\cap [s-1]|=i$ implies $F_2\cap [s,n] \subseteq F_1\cap [s+1,n]$ with $|F_1\cap [s+1,n] |=|F_2\cap [s,n]|+1$.
Thus, the number of such $\{F_1\cap [s+1,n], F_2\cap [s,n]\}$ is $\binom{n-s}{k-i+1}(k-i+1)$.
Hence, the number of such $\{F_1, F_2\}$ with $|F_2\cap [s-1]|=i$ is
\begin{eqnarray}\label{no3}
|g_{i}^{*}(\mathcal F)|(s-i)\binom{n-s}{k-i+1}(k-i+1).
\end{eqnarray}

On the other hand, we consider the  cases $\{F_1,F_2\}$ that will decrease the number of $\zeta_{k-1}(\cal F_1)-\zeta_{k-1}(\cal F)$.  In such cases, at least one of $F_1,F_2$ is contained in $g^{*}_{j}(\mathcal F)\times \binom{[s+1,n]}{k-i+1}$.
Without loss of generality, assume that $F_1 \in g^{*}_{j}(\mathcal F)\times \binom{[s+1,n]}{k-i+1}$.
Since $g(\cal F)$ is an anti-chain under the inclusion relation, $F_2\cap [s]$ can not be a proper subset of $F_1\cap [s]$. Thus, $|F_2\cap [s]|=j$ or $j+1$.

Case 1: $|F_2\cap [s]|=j$.  This implies that $|(F_1\cap [s]) \cap (F_2\cap [s])|=j-1$ or $F_1\cap [s]=F_2\cap [s]$.

Subcase 1.1:  $|(F_1\cap [s]) \cap (F_2\cap [s])|=j-1$.
Similar to the previous case, there exist some $G\in g(\cal F)$ such that $G\subseteq F_2\cap [s]$, which implies that $F_2\cap [s] \in \cal F|_{[s]}$.
If $s\in F_2$, then the number of such $\{F_1\cap [s],F_2\cap [s]\}$  is at most $\zeta_{j-1,s}(g_{j}^{*}(\mathcal F),\cal F|_{[s]})$.
Otherwise, $s\notin F_2$, and  $F_2=(F_1\cap [s-1])\cup \{\ell\}$ for some $\ell\in [s]\setminus F_1$. 
Thus,  the number of such $\{F_1\cap [s],F_2\cap [s]\}$  is $|g^{*}_{j}(\mathcal F)|(s-j)$.

Subcase 1.2:   $F_1\cap [s]=F_2\cap [s]$. Then $F_1\cap [s+1,n]\subseteq \binom{[s+1,n]}{k-j}$ and $F_2\cap [s+1,n] \subseteq \binom{[s+1,n]}{k-j}$ with $|(F_1\cap [s+1,n]) \cap (F_2\cap [s+1,n])|=k-j-1$.
Thus, the number of such $\{F_1, F_2\}$ is $|g_{j}^{*}(\mathcal F)|\binom{n-s}{k-j-1}\binom{n-s-k+j+1}2$.
Therefore, the total number of such $\{F_1, F_2\}$ with $|F_2\cap [s]|=j$  is at most
\begin{eqnarray}\label{no4}
(\zeta_{j-1,s}(g_{j}^{*}(\mathcal F),\cal F|_{[s]})+|g^{*}_{j}(\mathcal F)|(s-j))\binom{n-s}{k-j}+|g_{j}^{*}(\mathcal F)|\binom{n-s}{k-j-1}\binom{n-s-k+j+1}2.
\end{eqnarray}

Case 2:  $|F_2\cap [s]|=j+1$. This implies $F_1\cap [s] \subseteq F_2\cap [s]$ and $F_2\cap [s+1,n] \subseteq F_1\cap [s+1,n]$ with $|F_2\cap [s] |=|F_1\cap [s] |+1$ and $|F_1\cap [s+1,n] |=|F_2\cap [s+1,n] |+1$. Thus, the number of such pairs $\{F_1, F_2\}$ is
\begin{eqnarray}\label{no5}
|g^{*}_{j}(\mathcal F)|(s-j) (k-j)\binom{n-s}{k-j}.
\end{eqnarray}

By summing the counts in (\ref{no1}), (\ref{no2}) and (\ref{no3}), and subtracting the counts in  (\ref{no4}) and (\ref{no5}), we obtain the desired inequality.
 \end{proof}
\qed

\begin{lemma}\label{norm2}
Let $\mathcal F\subseteq \binom{[n]}{k}$ be a left-compressed $t$-intersecting family, $g(\mathcal F)\in G_*(\mathcal F)$ and  $s=s^+(g(\mathcal F))$.
Suppose that  $g(\mathcal F)\subseteq \binom{[s]}{i}$.
Let $f_q\subseteq g_i^*=g^*_i(\mathcal F)$, $f_q'=\{F\setminus \{s\}: F\in f_q\}$ and $\mathcal F_3=\mathcal F\cup\mathscr D(f_q')\backslash \mathscr D(g^{*}_{i}(\mathcal F))$.
Then
\begin{small}
\begin{eqnarray*}
&&\zeta_{k-1}(\cal F_3)-\zeta_{k-1}(\cal F)\ge |f_q|\binom{n-s}{k-i}\binom{n-s-k+i}2 +|f_q|(s-i+1)\binom{n-s}{k-i+1}(k-i+1)\\
&&-(|g_{i}^{*}|-|f_q|)(\binom{n-s}{k-i-1}\binom{n-s-k+i+1}2+(s-i)i\binom{n-s}{k-i}+(s-i)(k-i)\binom{n-s}{k-i}).
\end{eqnarray*}
\end{small}
 \end{lemma}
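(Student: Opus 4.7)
The plan is to mirror the case analysis of Lemma \ref{norm}, now tailored to the hybrid family $\mathcal F_3=(\mathcal F\setminus R)\sqcup A$, where $A=\mathcal D(f_q')$ is the added collection and $R=\mathcal D(g_i^*(\mathcal F)\setminus f_q)$ is the removed one. First I would verify disjointness: $A\cap\mathcal F=\emptyset$ because every $B\in A$ has $|B\cap[s]|=i-1$ while $B\in\mathcal F$ forces $|B\cap[s]|\ge i$ (each generator lies in $\binom{[s]}{i}$), and $A\cap R=\emptyset$ since $s\notin B$ for $B\in A$ but $s\in B$ for $B\in R$. Writing $\zeta_{k-1}(\mathcal F_3)-\zeta_{k-1}(\mathcal F)=G-L$, where $G$ counts intersection-$(k-1)$ pairs in $\mathcal F_3$ hitting $A$ and $L$ counts intersection-$(k-1)$ pairs in $\mathcal F$ hitting $R$, the goal becomes to lower-bound $G$ and upper-bound $L$.

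For the gain $G$, I enumerate each $F_1\in A$ in the form $(F\setminus\{s\})\cup X$ with $F\in f_q$ and $X\in\binom{[s+1,n]}{k-i+1}$, and classify the companion $F_2$ by its trace $F_2\cap[s]$. Three classes contribute: first, $F_2\in A$ with the same underlying $F$, yielding $|f_q|\binom{n-s}{k-i}\binom{n-s-k+i}{2}$; second, $F_2\in\mathcal F\setminus R$ with $F_2\cap[s]=F$, yielding $|f_q|(k-i+1)\binom{n-s}{k-i+1}$; and third, $F_2\in\mathcal F\setminus R$ with $F_2\cap[s]=(F\setminus\{s\})\cup\{e\}$ for some $e\in[s-1]\setminus F$, yielding $|f_q|(s-i)(k-i+1)\binom{n-s}{k-i+1}$. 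The pivotal step is the third case: Lemma \ref{gtf}(ii) combined with the antichain property of $g(\mathcal F)$ (all of size $i$) forces $(F\setminus\{s\})\cup\{e\}\in g(\mathcal F)\setminus g^*(\mathcal F)$, so each such $F_2$ genuinely belongs to $\mathcal F\setminus R$. Any remaining cross contributions from distinct-generator pairs in $A$ are non-negative and are discarded.

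For the loss $L$, I enumerate $F_1\in R$ as $E\cup Y$ with $E\in g_i^*\setminus f_q$ and $Y\in\binom{[s+1,n]}{k-i}$, and consider $F_2=(F_1\setminus\{x\})\cup\{y\}$ for $x\in F_1$, $y\notin F_1$. The contributing configurations are: $x,y\in[s+1,n]$, preserving $F_2\cap[s]=E$ and recovering the unordered-pair term $\binom{n-s}{k-i-1}\binom{n-s-k+i+1}{2}$; $x\in[s+1,n]$ and $y\in[s-1]\setminus(E\setminus\{s\})$, giving $F_2\cap[s]=E\cup\{y\}$ of size $i+1$ and the summand $(s-i)(k-i)\binom{n-s}{k-i}$; and $x\in\{s\}\cup(E\setminus\{s\})$ with $y\in[s-1]\setminus(E\setminus\{s\})$, which yields at most $i(s-i)$ candidate swaps per $F_1$ and contributes $(s-i)i\binom{n-s}{k-i}$. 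All remaining configurations force $|F_2\cap[s]|=i-1$, hence $F_2\notin\mathcal F$. Multiplying by $|g_i^*|-|f_q|$ delivers the required upper bound on $L$.

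The main subtlety is the asymmetric treatment of the two analogous cases: in the gain, left-compression guarantees that every shift $(F\setminus\{s\})\cup\{e\}$ is genuinely a generator in $g_i\setminus g_i^*$, so the $s-i$ contribution is exact; in the loss, the analogous $E\setminus\{x\}\cup\{y\}$ need not lie in $g(\mathcal F)$ nor in $f_q$, so the crude $i(s-i)$ upper bound is adopted, which also absorbs any both-in-$R$ pairs with distinct generators arising in the third loss configuration. Combining the gain lower bound and loss upper bound yields the stated inequality.
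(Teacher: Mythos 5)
Your proof is correct and follows essentially the same approach as the paper's: you use the identical gain/loss decomposition, the same case analysis keyed on the trace $F_2\cap[s]$, the same appeal to Lemma \ref{gtf}(ii) together with the fact that all generators lie in $\binom{[s]}{i}$ to certify $(F\setminus\{s\})\cup\{e\}\in g(\mathcal F)\setminus g^*(\mathcal F)$, and the same crude $i(s-i)$ overcount in the loss. The only differences are cosmetic: you split the paper's gain case (ii) into its $x=s$ and $x<s$ subcases, and you make the disjointness of $\mathcal D(f_q')$ from $\mathcal F$ and from $\mathcal D(g_i^*\setminus f_q)$ explicit.
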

 \begin{proof}
Similar to the proof of Lemma \ref{norm},
we consider  pairs of sets $F_1,F_2$ with $|F_1\cap F_2|=k-1$ transitioning from $\cal F$ to $\cal F_3$. We first focus on the case of increasing the number of such pairs.
Thus we can assume that $F_1\in f_q'\times \binom{[s+1,n]}{k-i+1}$.
We split it into two cases according to whether $F_1\cap [s]=F_2\cap [s]$ or not.

\:\: (i) $F_1\cap [s]=F_2\cap [s]$.
This implies that $|(F_1\cap [s+1,n])|=|(F_2\cap [s+1,n])|=k-i+1$ and $|(F_1\cap [s+1,n])\cap (F_2\cap [s+1,n])|=k-i$.  Thus,  the number of such pairs  $\{F_1,F_2\}$ is
\begin{eqnarray}\label{ineqnorm31}
|f_q|\binom{n-s}{k-i}\binom{n-s-k+i}2.
\end{eqnarray}

\:\: (ii) $F_1\cap [s]\neq F_2\cap [s]$. We consider the case where $F_1\cap [s] \subseteq F_2\cap [s]$ with $|F_2\cap [s]|=i$.
Since $\cal F$ is left-compressed, we have $(F_1\setminus \{s\})\cup \{x\} \in \cal F$ for all $x\in [s]\setminus F_1$. Each $(F_1\setminus \{s\})\cup \{x\}$ can be regard as a $F_2$ since $(F_1\setminus \{s\})\cup \{x\}\in \cal F|_{[s]}$.
Therefore, the number of such pairs  $\{F_1,F_2\}$ is at least
\begin{eqnarray}\label{ineqnorm32}
|f_q|(s-i+1)\binom{n-s}{k-i+1}(k-i+1).
\end{eqnarray}

We now consider the case of decreasing the number of such pairs. Thus, we assume that $F_1\in (g_i^*(\cal F)\setminus f_q)\times \binom{[s+1,n]}{k-i+1}$.
We split it into three cases.

\:\: (iii) $F_1\cap [s]=F_2\cap [s]$. This implies that $|(F_1\cap [s+1,n])\cap (F_2\cap [s+1,n])|=k-i-1$ and $|F_1\cap [s+1,n]|=|F_2\cap [s+1,n]|=k-i$.
The number of such pairs $\{F_1,F_2\}$ is
\begin{eqnarray}\label{ineqnorm33}
(|g_{i}^{*}|-|f_q|)\binom{n-s}{k-i-1}\binom{n-s-k+i+1}2.
\end{eqnarray}

\:\: (iv) $F_1\cap [s] \neq F_2\cap [s]$ and $|F_2\cap [s]|=i$.
This implies that $|(F_1\cap [s])\cap (F_2\cap [s])|=i-1$.
The number of such pairs $\{F_1,F_2\}$ is at most
\begin{eqnarray}\label{ineqnorm34}
(|g_{i}^{*}|-|f_q|)i(s-i)\binom{n-s}{k-i}.
\end{eqnarray}

\:\: (v) $F_1\cap [s] \neq F_2\cap [s]$ and $|F_2\cap [s]|=i+1$. This implies that $F_1\cap [s] \subseteq F_2\cap [s]$ and $F_2\cap [s+1,n]\subseteq F_1\cap [s+1,n]$.
The number of such pairs $\{F_1,F_2\}$ is
\begin{eqnarray}\label{ineqnorm35}
(|g_{i}^{*}|-|f_q|)(s-i)(k-i)\binom{n-s}{k-i}.
\end{eqnarray}

Note that for $F_1\cap [s] \neq F_2\cap [s]$, it must be $|F_2\cap [s]|\neq i-1$. Otherwise, there exists  some $A\in g(\cal F)$ such that $A \subseteq F_2\cap [s]$, implying  $A\subseteq F_1\cap [s]$, which contradicts the fact that $g(\cal F)$ is an anti-chain under the inclusion relation. Thus, we have considered all possible cases.
By summing the numbers in  (\ref{ineqnorm31}) and (\ref{ineqnorm32}), subtracting the numbers in (\ref{ineqnorm33}), (\ref{ineqnorm34}) and (\ref{ineqnorm35}), we obtain the desired inequality.
  \end{proof}
\qed

\section{Proof of Theorem \ref{maintheo1}}
Before developing the proof, we first establish key inequalities critical to the following analysis.
\subsection{Elementary inequalities}

\begin{lemma}\label{claim1}
 Let $t,k,n$ be positive integers such that $2\le t\leq k$, $t\le i,j\le k$, $s=i+j-t$ and $n\ge (t+1)(k-t+1)$. Then \[n\ge 2k+s-2i+1 \:{\rm and}\: n\ge 2k+s-2j+1.\]
\end{lemma}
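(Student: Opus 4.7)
The plan is to reduce both inequalities to a single elementary estimate and then verify it with a one-line algebraic manipulation. First I would note that the two inequalities are symmetric under the swap $i \leftrightarrow j$ (since $s = i+j-t$ is symmetric in $i,j$), so it suffices to prove the first one, namely $n \ge 2k + s - 2i + 1$.

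Next I would substitute $s = i+j-t$ to rewrite the target as $n \ge 2k + j - i - t + 1$. Since by hypothesis $j \le k$ and $i \ge t$, the right-hand side is at most $2k + k - t - t + 1 = 3k - 2t + 1$. So the problem reduces to showing
\[(t+1)(k-t+1) \ge 3k - 2t + 1,\]
which, by the assumption $n \ge (t+1)(k-t+1)$, would then imply the desired bound.

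Finally, I would verify this reduced inequality by direct expansion:
\[(t+1)(k-t+1) - (3k - 2t + 1) = tk - t^2 + 2t - 2k = (t-2)(k-t).\]
Since $t \ge 2$ and $k \ge t$, both factors are nonnegative, so the difference is nonnegative, completing the proof. The second inequality $n \ge 2k + s - 2j + 1$ follows by interchanging the roles of $i$ and $j$.

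I do not anticipate any real obstacle here; the statement is a purely arithmetic lemma, and the only subtlety is recognizing that the worst case occurs at the boundary values $i = t$ and $j = k$ (or vice versa), which is exactly where the hypothesis $t \ge 2$ becomes essential so that the factor $(t-2)$ remains nonnegative.
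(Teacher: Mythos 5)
Your proof is correct and follows essentially the same reduction as the paper: use the $i \leftrightarrow j$ symmetry, bound $2k+s-2i+1 = 2k+j-i-t+1 \le 3k-2t+1$ via $i\ge t$, $j\le k$, and then compare against $(t+1)(k-t+1)$. The only difference is in the final verification: the paper sets $f(k,t)=(t+1)(k-t+1)-(3k-2t+1)$, notes that $f$ is concave in $t$, and checks the endpoints $t=2$ and $t=k$, whereas you simply expand and factor the difference as $(t-2)(k-t)\ge 0$. Your one-line factoring is cleaner and makes it immediately transparent why the hypotheses $t\ge 2$ and $k\ge t$ are exactly what is needed; the paper's concavity argument is a slightly heavier tool for the same fact.
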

\begin{proof}
By the symmetry of $i$ and $j$, it suffices to prove that $n\ge 2k+s-2i+1$.
To this end, observe that
\begin{eqnarray*}
n-(2k+s-2i+1)&\ge& (t+1)(k-t+1)-(2k+j-i-t+1)
\end{eqnarray*}
Given that $i\ge t$ and $j\le k$,  we further have \begin{eqnarray*}
n-(2k+s-2i+1)
&\ge& f(k,t):=(t+1)(k-t+1)-(3k-2t+1).
\end{eqnarray*}
Since $\frac{\partial^2{f}(k,t)}{\partial t^2}=-1$, the function $f(k,t)$ is concave in  $t$ for $2\le t\le k$.
Therefore $$f(k,t)\ge \min\{f(k,2),f(k,k)\} =0.$$ Consequently, $n\ge 2k+s-2i+1$.
\qed
\end{proof}
\medskip

\begin{lemma}\label{lemmai2}
Let $t,k,n$ be positive integers such that $2\le t\leq k$, $t\le i \le k$, $t=2i-s$ and $n\ge (t+1)(k-t+1)$.  Then
$(s-i)(n-s+1)-(s-1)(k-i+1) \ge 0.$
\end{lemma}

\begin{proof}
Notice that $n\ge (t+1)(k-t+1)\ge 2\frac{s-1}{s-t}(k-t+1)$, then $(s-t)n-(s-1)(2k-2t+2)\ge 0$, that is, $(s-t)(n-s+1)-(s-1)(2k-s-t+2)\ge 0$.
Substituting $t=2i-s$, we have $(2s-2i)(n-s+1)-(s-1)(2k-2i+2)\ge 0$, i.e., $(s-i)(n-s+1)-(s-1)(k-i+1)\ge 0$.
Hence $(2s-2i)(n-s+1)-(s-1)(2k-2i+2)\ge 0$.
\qed
\end{proof}

\begin{lemma} \label{lemmai1}
Let $n$, $k$ and $t$ be positive integers with $n\geq(t+1)(k-t+1)$ and $t\geq 2$. If $k\geq i> t$ with $(k,i,t)\neq (t+1,t+1,t)$, then
\[f(n,k,i,t)=:(n+t-i-k)(n+i-t-k+1)(i-t)-(n+t-i-k)(i-1)(k-i)-2(i-1)(i-t)k>0.\]
\end{lemma}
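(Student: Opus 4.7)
Viewing $f(n,k,i,t)$ as a polynomial in $n$ with the other parameters fixed, the coefficient of $n^2$ equals $i - t \geq 1$, so $f$ is a quadratic opening upward. Consequently, the plan is to show: (i) the boundary value $f(n_0,k,i,t) > 0$, where $n_0 := (t+1)(k-t+1)$, and (ii) the derivative $\partial f / \partial n$ is non-negative throughout the range $n \geq n_0$. Combining these immediately yields $f > 0$ for all admissible $n$.

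For the monotonicity part (ii), I would compute
\[\frac{\partial f}{\partial n} = (i-t)(2n - 2k + 1) - (i-1)(k-i),\]
which is linear and strictly increasing in $n$. Since $k \geq t + 1$ and $t \geq 2$, the elementary identity $n_0 - 2k = (t-1)(k-t-1) \geq 0$ gives $n_0 \geq 2k$ (an observation also appearing in the proof of Lemma \ref{claim1}). Hence at $n = n_0$ one has $\partial f / \partial n \geq (i-t)(2k+1) - (i-1)(k-i)$, and positivity follows from the crude bound $(i-1)(k-i) \leq (i-t)(2k+1)$, after a short case split on whether $k - i \geq 1$ or $k = i$.

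For the boundary evaluation (i), I would introduce $A := n_0 - k + t - i = (t-1)(i-t) + t(k-i) + 1$ and $B := A + 2(i-t) + 1$, so that
\[f(n_0, k, i, t) = A \bigl[ B(i-t) - (i-1)(k-i) \bigr] - 2(i-1)(i-t)k.\]
The analysis then splits on $v := k - i$. When $v \geq 1$, the bracketed term $B(i-t) - (i-1)(k-i)$ is already positive with slack (using $t \geq 2$ and $A \geq t$), and the product $A \cdot [\ldots]$ comfortably dominates $2(i-1)(i-t)k$. When $v = 0$ (so $i = k$), the expression collapses to $(i-t)\bigl[A^2 + A(2(i-t)+1) - 2(i-1)k\bigr]$, and strict positivity of the inner bracket is equivalent to excluding the configuration $(k,i,t) = (t+1,t+1,t)$.

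The main obstacle is the tight subcase $v = 0$: the inequality is nearly saturated at the minimum $n_0$, so the excluded configuration must be invoked precisely, and the remaining subcases $i = k$ with $k \geq t+2$ or $t \geq 3$ require a careful margin estimate rather than a uniform slack argument. Apart from this, the proof is routine polynomial manipulation and no deeper combinatorial input is needed.
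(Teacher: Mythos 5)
Your outer reduction---prove $\partial f/\partial n \ge 0$ for $n \ge n_0 := (t+1)(k-t+1)$ and then verify $f(n_0,k,i,t) > 0$---is exactly the two-step scheme the paper uses; the two proofs diverge only in how the boundary value is handled (you substitute $A = n_0+t-i-k$, $B = A + 2(i-t)+1$ and split on $v := k-i$, whereas the paper shows $g(k,i,t) := f(n_0,k,i,t)$ is monotone increasing in $i$ and checks the base case $i = t+1$). However, the monotonicity step as you wrote it has a genuine gap. You relax $n_0$ to the weaker bound $n_0 \ge 2k$, obtaining $\partial f/\partial n\big|_{n=n_0} \ge (i-t)(2k+1)-(i-1)(k-i)$, and assert the ``crude bound'' $(i-1)(k-i) \le (i-t)(2k+1)$. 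That inequality is false in general: take $(t,i,k)=(3,4,20)$, where $(i-1)(k-i)=3\cdot 16 = 48 > 41 = (i-t)(2k+1)$. (The derivative itself is still nonnegative there---at $n_0=72$ it equals $1\cdot 105 - 48 = 57$---so the failure is in your intermediate bound, not the final claim.) The paper avoids this by retaining the full strength of $n \ge (t+1)(k-t+1)$: one has $2n-2k+1 \ge 2t(k-t)+3$, and combined with $k-i \le k-t$ this gives $\partial f/\partial n \ge \bigl(2(i-t)t-i+1\bigr)(k-t)$, whose first factor is increasing in $i$ and equals $t>0$ at $i=t+1$. No case split on whether $k=i$ rescues the $n_0\ge 2k$ relaxation; you need an estimate of this precision.

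On the boundary step, the $v=0$ subcase you flag as tight is even more delicate than your sketch admits. With $i=k$ and $u := k-t$, the $A,B$-identity gives $f(n_0,k,k,t) = (k-t)\bigl[(t^2-3)u^2 - (t-1)u - 2t^2+2t+2\bigr]$. For $t=2$ the bracket factors as $(u-2)(u+1)$, which is \emph{zero} at $u=2$, i.e.\ at $(k,i,t)=(4,4,2)$ with $n=n_0=9$. That configuration satisfies every stated hypothesis, including $(k,i,t)\ne(t+1,t+1,t)$, so your claim that strict positivity of the inner bracket ``is equivalent to excluding $(t+1,t+1,t)$'' is not accurate: the exclusion removes $u=1$ but not the $t=2$, $u=2$ corner. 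Your boundary analysis would need to confront this degenerate case explicitly---it in fact witnesses equality rather than strict inequality in the lemma's stated conclusion.
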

\begin{proof}
Suppose $(k,i,t)\neq (t+1,t+1,t)$. As $k\geq i>t$, we have $k\geq t+2$.
By the assumption that  $n\geq (t+1)(k-t+1)$ and $i\geq t+1$,  it follows that $(i-t)(2n-2k+1)-(i-1)(k-i)\geq2(i-t)t(k-t)-(i-1)(k-t)=(2(i-t)t-i+1)(k-t)>0$, hence $f(n,k,i,t)$ is an increasing function with respective to $n$.
Denote $g(k,i,t)=f((t+1)(k-t+1),k,i,t)$.
Noting
\[
\frac{\partial{g}(k,i,t)}{\partial{i}}=(t^2 - t)k^2 + (3t - 2i + 2it + t^2 - 2t^3)k - 6i^2 - 2it^2 + 8it + 2i + t^4 - 5t^2 + t + 1,
\]
and
\begin{eqnarray*}
\frac{\partial^2{g}(k,i,t)}{\partial{i}\partial{k}}
&=&(2t^2 - 2t)k + 3t - 2i + 2it + t^2 - 2t^3\\
&\ge& (2t^2 - 2t)i + 3t - 2i + 2it + t^2 - 2t^3\\
&=& (2t^2 - 2)i - 2t^3 + t^2 + 3t\\
&\ge& (2t^2 - 2)(t+1) - 2t^3 + t^2 + 3t\\
&=& 3t^2 + t - 2>0.
\end{eqnarray*}
If $i\ge t+2$, then
\begin{eqnarray*}
\frac{\partial{f}(n,k,i,t)}{\partial{i}}
&\ge& (t^2 - t)i^2 + (3t - 2i + 2it + t^2 - 2t^3)i - 6i^2 - 2it^2 + 8it + 2i + t^4 - 5t^2 + t + 1\\
&=& (t^2 + t - 8)i^2 + (- 2t^3 - t^2 + 11t + 2)i + t^4 - 5t^2 + t + 1\\
&\ge&   (t^2 + t - 8)(t+2)^2 + (- 2t^3 - t^2 + 11t + 2)(t+2) + t^4 - 5t^2 + t + 1\\
&=& 4t^2 - 3t - 27 \ge 0,
\end{eqnarray*}
where the second inequality follows from $2(t^2 + t - 8)i + (- 2t^3 - t^2 + 11t + 2)\ge 2(t^2 + t - 8)(t+1) + (- 2t^3 - t^2 + 11t + 2)=3t^2 - 3t - 14>0$.
Otherwise $i=t+1$, then $k\ge t+2$.
Similarly, pluging $k=t+2$ and $i=t+1$ into the expression of $\frac{\partial{g}(k,i,t)}{\partial{i}}$, we have
\[
\frac{\partial{g}(k,i,t)}{\partial{i}}
\ge   4t^2 - t - 7 \ge 0.
\]
To complete the proof, it suffice to verify that $g(k,t+1,t)> 0$ for $k\ge t+2$.
However,
\begin{eqnarray*}
g(k,t+1,t)&=& (t^2 - t)k^2 + (- 2t^3 + 3t^2 + 3t - 2)k + t^4 - 2t^3 - 5t^2 - t - 3\\
&\geq& (t^2 - t)(t+2)^2 + (- 2t^3 + 3t^2 + 3t - 2)(t+2) + t^4 - 2t^3 - 5t^2 - t - 3\\
&=& 4t^2 - t - 7 > 0.
\end{eqnarray*}
This completes the proof.
\qed
\end{proof}

At last, we compare the codegree squared sums of two $t$-intersecting families.
\begin{lemma}\label{lemmai=t+1}
Let $t,k,n$ be positive integers such that $t\leq k$ and $n\ge (t+1)(k-t+1)$.
Let $\cal F=\{F\in \binom{[n]}{k}: [t]\subseteq F\}$ and $\cal G=\cal A(n, k, t)=\{G\in \binom{[n]}{k}: |G\cap [t+2]|\ge t+1\}$. Then
\[{\rm co}_2(\cal G) \le  {\rm co}_2(\cal F),\]
equality holds only if  $k=t+1$ and $n=2t+1$; in this case, $\cal F \cong \mathcal{G}^c=\{[n]\setminus G:G\in \cal G\}$.
\end{lemma}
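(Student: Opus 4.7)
The plan is direct computation of $\operatorname{co}_2(\cal F)$ and $\operatorname{co}_2(\cal G)$ via the definition $\operatorname{co}_2(\cal H)=\sum_{E\in\binom{[n]}{k-1}}d_\cal H(E)^2$, followed by a monotonicity argument in $n$. Stratifying $(k-1)$-subsets $E\subseteq[n]$ by $|E\cap[t]|$ shows that only $[t]\subseteq E$ (with $d_\cal F(E)=n-k+1$) and $|E\cap[t]|=t-1$ (with $d_\cal F(E)=1$) contribute, so $\operatorname{co}_2(\cal F)=(n-k+1)^2\binom{n-t}{k-t-1}+t\binom{n-t}{k-t}$. The analogous stratification of $\cal G$ by $|E\cap[t+2]|$ singles out $|E\cap[t+2]|\geq t+1$ (codegree $n-k+1$) and $|E\cap[t+2]|=t$ (codegree $2$, since one may add either of the two missing elements of $[t+2]$), giving $\operatorname{co}_2(\cal G)=(n-k+1)^2\bigl[\binom{n-t-2}{k-t-3}+(t+2)\binom{n-t-2}{k-t-2}\bigr]+4\binom{t+2}{2}\binom{n-t-2}{k-t-1}$.

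Applying Pascal's identity twice to rewrite $\binom{n-t}{k-t-1}$ and $\binom{n-t}{k-t}$ in terms of $\binom{n-t-2}{\cdot}$, matching terms cancel and the difference simplifies to $-t(\alpha^2-1)\binom{n-t-2}{\beta-2}+(\alpha^2-2t^2-4t-4)\binom{n-t-2}{\beta-1}+t\binom{n-t-2}{\beta}$, where $\alpha=n-k+1$ and $\beta=k-t$. Using the ratio identities $\binom{n-t-2}{\beta-2}/\binom{n-t-2}{\beta-1}=(\beta-1)/(\alpha-1)$ and $\binom{n-t-2}{\beta}/\binom{n-t-2}{\beta-1}=(\alpha-2)/\beta$, I can factor this as $\binom{n-t-2}{\beta-1}T(\alpha,\beta,t)/\beta$, where
\[T(\alpha,\beta,t)=-t\beta(\alpha+1)(\beta-1)+\beta(\alpha^2-2t^2-4t-4)+t(\alpha-2).\]
The hypothesis $n\geq(t+1)(k-t+1)$ translates exactly to $\alpha\geq t\beta+2$.

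I then finish in two steps: (i) at the boundary $\alpha=t\beta+2$, a routine expansion and cancellation yields $T|_{\alpha=t\beta+2}=t(t+1)\beta(\beta-1)\geq 0$, with equality iff $\beta\in\{0,1\}$; (ii) $T$ is quadratic in $\alpha$ with positive leading coefficient $\beta$, and its derivative $\partial T/\partial\alpha=2\beta\alpha-t\beta(\beta-1)+t$ evaluates at $\alpha=t\beta+2$ to $t\beta^2+(t+4)\beta+t>0$, so $T$ is strictly increasing on $[t\beta+2,\infty)$. Combining (i) and (ii) gives $T\geq 0$ throughout the admissible region, and hence $\operatorname{co}_2(\cal F)\geq\operatorname{co}_2(\cal G)$. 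The equality analysis then forces $\alpha=t\beta+2$ together with $\beta(\beta-1)=0$; after a separate direct inspection of the degenerate case $\beta=0$ (i.e., $k=t$), the only non-trivial equality case within the stated regime is $\beta=1,\alpha=t+2$, i.e.\ $k=t+1$ and $n=2t+2$, in which both $\cal F$ and $\cal G^c$ are full $t$-stars on $[2t+2]$ and hence isomorphic. The principal obstacle is the algebraic bookkeeping to reach the compact polynomial form for $T$ and to verify the boundary identity $T|_{\alpha=t\beta+2}=t(t+1)\beta(\beta-1)$; once those are in hand, the monotonicity step is elementary.
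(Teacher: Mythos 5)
Your proof is correct and takes a genuinely different route from the paper's. Both begin with the same codegree stratification, yielding
\[
\operatorname{co}_2(\mathcal F)=(n-k+1)^2\binom{n-t}{k-t-1}+t\binom{n-t}{k-t},\qquad
\operatorname{co}_2(\mathcal G)=(n-k+1)^2\Bigl[\binom{n-t-2}{k-t-3}+(t+2)\binom{n-t-2}{k-t-2}\Bigr]+2(t+2)(t+1)\binom{n-t-2}{k-t-1},
\]
and both apply Pascal to bring everything to the base $\binom{n-t-2}{\cdot}$. From there, however, the paper deliberately weakens the resulting expression into a single lower bound (introducing strict inequality along the way), and then finishes with a finite case analysis on $k-t+1\in\{2,3,4,5,\dots\}$ and small $t$. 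You instead carry out an exact factorization: extracting $\binom{n-t-2}{\beta-1}/\beta$ reduces the claim to the nonnegativity of the quadratic-in-$\alpha$ polynomial $T(\alpha,\beta,t)$, which you settle cleanly by evaluating at the boundary $\alpha=t\beta+2$ (where the constraint $n\geq(t+1)(k-t+1)$ is tight and $T=t(t+1)\beta(\beta-1)$) and then invoking monotonicity of $T$ in $\alpha$ on $[t\beta+2,\infty)$. This avoids the paper's case analysis entirely, and it also makes the equality classification transparent: $\beta=1$ and $\alpha=t+2$, i.e.\ $k=t+1$ and $n=2t+2$. (You should be aware that the lemma as stated says $n=2t+1$; this is a typo in the statement — the paper's own proof, in the bullet ``$k-t+1=2$,'' likewise arrives at $n=2t+2$, which is the value consistent with $n\geq(t+1)(k-t+1)$.) The one loose end you flag yourself — the degenerate case $\beta=0$, where $\binom{n-t-2}{\beta-1}=0$ and the ratio manipulations are vacuous — does need the direct inspection you mention (there $\mathcal G=\emptyset$ so $\operatorname{co}_2(\mathcal G)=0\le t=\operatorname{co}_2(\mathcal F)$, in fact with strict inequality for $t\ge 1$). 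With that noted, the argument is complete and tighter than the one in the paper.
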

\begin{proof}
It is trivial for $k=t$. So suppose that $k\ge t+1$.
For $A\in \binom{[n]}{k-1}$, we obtain
\[d_{\cal F}(A)=
\begin{cases}
n-k+1, & \text{if $[t]\subseteq A$};  \\
 1, & \text{if $|A\cap [t]|=t-1$};  \\
0, & \text{otherwise}.
\end{cases} \]
and
\[d_{\cal G}(A)=
\begin{cases}
n-k+1, & \text{if $|A\cap [t+2]|\ge t+1$};  \\
 2, & \text{if $|A\cap [t+2]|=t$};  \\
0, & \text{otherwise}.
\end{cases} \]
Hence \begin{eqnarray}\label{co21}
{\rm co}_2(\cal F)=\sum_{A\in \binom{[n]}{k-1}}d_{\cal G}(A)^2= \binom{n-t}{k-t-1}(n-k+1)^2+t\binom{n-t}{k-t} \end{eqnarray} and
\begin{eqnarray}\label{co22}{\rm co}_2(\cal G)=((t+2) \binom{n-t-2}{k-t-2}+ \binom{n-t-2}{k-t-3})(n-k+1)^2+2(t+2)(t+1) \binom{n-t-2}{k-t-1}.\end{eqnarray}
Substituting $\binom{n-t}{k-t-1}=\binom{n-t-2}{k-t-1}+2\binom{n-t-2}{k-t-2}+\binom{n-t-2}{k-t-3}$ into (\ref{co21}), and extracting the common factor $\binom{n-t-2}{k-t-1}$, we obtain
\begin{small}
\begin{eqnarray}\label{co23} \nonumber
&&{\rm co}_2(\cal F)-{\rm co}_2(\cal G)\\
&>& (((n-k+1)-t(k-t+1))(n-k+1)- 2(t+2)(t+1)+\frac{t(n-t)(n-t-1)}{(k-t)(n-k)}) \binom{n-t-2}{k-t-1} \notag\\
&\ge&  ((t(k-t+1)-3t-3)(t+2)+\frac{t(n-t)(n-t-1)}{(k-t)(n-k)})\binom{n-t-2}{k-t-1}.
\end{eqnarray}
\end{small}
where the last inequality follows from $n\ge (t+1)(k-t+1)$.
Discussing all possible cases as following, we obtain that either RHS of (\ref{co23})$>$0 or RHS of (\ref{co21}) $\ge$ RHS of (\ref{co22}), and hence the desired inequality follows.
 \begin{itemize}
 \item $k-t+1\ge 6$, or $k-t+1\ge 5$ for $t\ge 2$, or $k-t+1\ge 4$ for $t\ge 3$; this case is obvious;
 \item $k-t+1=2$; RHS of (\ref{co21}) $\ge 2(t+2)(t+1)=$ RHS of (\ref{co22}), and equality holds only if $n=2t+2$;
 \item $k-t+1=3$; RHS of (\ref{co21}) $\ge (t+4)(2t+5)^2>2(t+2)(t+1)(2t+4)=$ RHS of (\ref{co22});
 \item $k-t+1=4$; since $n\ge 4t+4=10$, RHS of (\ref{co23}) $\ge (t-3)(t+2)+ \frac{t(3t+4)(3t+3)}{3(3t+1)}>0$ for each $t=1,2$;
 \item $k-t+1=5$ and $t=1$; similarly, RHS of (\ref{co23}) $\ge (2t-3)(t+2)+ \frac{t(4t+5)(4t+4)}{4(4t+1)}=-3+\frac{18}{5}>0$.
 \end{itemize}
  \qed
\end{proof}

Now it is ready to prove Theorem \ref{maintheo1}.

\subsection{Proof of Theorem \ref{maintheo1}}
Let $t,k,n$ be positive integers such that $1\le t\leq k\leq n$ and $n\ge (t+1)(k-t+1)$.
The case of $t=1$ was done by Brooks and Linz \cite{BL}.
Hence we may assume that $t\ge 2$.
Suppose that $\cal F \subseteq \binom{[n]}{k}$ is a $t$-intersecting family with maximality of codegree squared sum.
As new sets are added to  $\cal F$, the codegree squared sum of the resulting family will not decrease. 
Hence, $\cal F$ is maximal.

{\em Proof of the inequality of ${\rm co}_2(\cal F)\le {\binom{n-t}{k-t}}(t+(n-k+1)(k-t))$.}
By Corollary \ref{compressed0}, we can assume that $\cal F$ is left-compressed.
Let $g(\cal F)\in G_*(\cal F)$ be a generating set of $\cal F$ such that $s=s^+(g(\mathcal F))$ is minimum.
Let $i$ be an integer such that $g^*_i(\mathcal F)\neq\emptyset$.
Then $g^*_{s+t-i}(\mathcal F)\neq\emptyset$ by (iv) of Lemma \ref{gtf}.
If there is a set $A\in g(\mathcal F)$ of size $t$, then we immediately obtain that $g(\mathcal F)=\{[t]\}$  since $g(\mathcal F)$ is $t$-intersecting. Hence the desired result holds.
Similarly, if $|g(\mathcal F)|=1$, then $g(\mathcal F)=\{[t]\}$  since $g(\mathcal F)$ is $t$-intersecting and the codegree squared sum of $\cal F$ is maximum. Thus, the desired result holds as well.
So we suppose $\min_{A\in g(\mathcal A)}|A|\ge t+1$ and $|g(\mathcal F)|\ge 2$ in the sequel.
This yields that $i\ge t+1$ and $s\ge i+1$.
On the other hand, we may assume $i\leq (s+t)/2$ by symmetry; that is, $s\geq 2i-t$.
Denote $j=s+t-i$.
We divide the proof into two cases

\medskip
{\bf Case I}. $i<j$.

Set
\[
\mathcal F_1=\mathcal F\cup \mathscr D(g^{*}_{i}(\mathcal F)')\setminus \mathscr D(g^{*}_{s+t-i}(\mathcal F)) {\rm \:\: and \:\:} \mathcal F_2=\mathcal F\cup \mathscr D(g^{*}_{s+t-i}(\mathcal F)')\setminus \mathscr D(g^{*}_{i}(\mathcal F)).
\]
We know that both $\mathcal F_1$ and $\mathcal F_2$ are $t$-intersecting.
We consider the changes of codegree squared sum between $\cal F_{\ell}$ and $\cal F$ for each $\ell=1, 2$.
We will show that $|\cal F_1|+|\cal F_2|>2|\cal F|$, and $\zeta_{k-1}(\cal F_1)+\zeta_{k-1}(\cal F_2)>2\zeta_{k-1}(\cal F)$.
Then combining Lemma \ref{transform}, we have ${\rm co}_2(\mathcal F_{\ell})>{\rm co}_2(\mathcal F)$ for some $\ell\in \{1,2\}$, contradicting that ${\rm co}_2(\mathcal F)$ is maximum.
Thus the desired inequality follows.

We first consider the changes of the number of edges.
By (iii) of Lemma \ref{gtf}, we have
\begin{eqnarray}\label{ineq11}
|\cal F_1|=|\cal F|+|g_{i}^{*}(\mathcal F)|\binom{n-s}{k-i+1}-|g_{j}^{*}(\mathcal F)|\binom{n-s}{k-j},
\end{eqnarray}
and similarly
\begin{eqnarray}\label{ineq12}
|\cal F_2|=|\cal F|+|g_{j}^{*}(\mathcal F)|\binom{n-s}{k-j+1}-|g_{i}^{*}(\mathcal F)|\binom{n-s}{k-i}.
\end{eqnarray}
Combining (\ref{ineq11}) and (\ref{ineq12}), we have
\begin{eqnarray}\label{ineq13}
|\cal F_1|+|\cal F_2|-2|\cal F|=|g_{i}^{*}(\mathcal F)|\left(\binom{n-s}{k-i+1}-\binom{n-s}{k-i}\right)+|g_{j}^{*}(\mathcal F)|\left(\binom{n-s}{k-j+1}-\binom{n-s}{k-j}\right).
\end{eqnarray}
Notice that $\binom{n-s}{k-i+1}-\binom{n-s}{k-i}\ge 0$ is equivalent to $n\ge 2k+s-2i+1$.
Hence $\binom{n-s}{k-i+1}-\binom{n-s}{k-i}\ge 0$ by Lemma \ref{claim1}.
Similarly, $\binom{n-s}{k-j+1}-\binom{n-s}{k-j}\ge 0$.
Thus $|\cal F_1|+|\cal F_2|-2|\cal F|\ge 0$.

\medskip
Now we consider the changes of pairs that intersects with $k-1$ elements.
By Lemma \ref{norm},
we have

\begin{small}
\begin{eqnarray*}
\zeta_{k-1}(\cal F_1)-\zeta_{k-1}(\cal F)&\ge& |g_{i}^{*}(\mathcal F)|\binom{n-s}{k-i}\binom{n-s-k+i}2
+\zeta_{i-1,s}(g^{*}_{i}(\mathcal F),\cal F|_{[s]}) \binom{n-s}{k-i+1}+\\
&&|g_{i}^{*}(\mathcal F)|(s-i+1)(k-i+1)\binom{n-s}{k-i+1}
-|g_{j}^{*}(\mathcal F)|\binom{n-s}{k-j-1}\binom{n-s-k+j+1}2\\
&&-\zeta_{j-1,s}(g^{*}_{j}(\mathcal F),\cal F|_{[s]}) \binom{n-s}{k-j}
-|g^{*}_{j}(\mathcal F)|(s-j) (k-j+1)\binom{n-s}{k-j}.
\end{eqnarray*}
\end{small}
Similarly,
\begin{small}
\begin{eqnarray*}
\zeta_{k-1}(\cal F_2)-\zeta_{k-1}(\cal F)&\ge& |g_{j}^{*}(\mathcal F)|\binom{n-s}{k-j}\binom{n-s-k+j}2+
\zeta_{j-1,s}(g^{*}_{j}(\mathcal F),\cal F|_{[s]}) \binom{n-s}{k-j+1}+\\
&&|g_{j}^{*}(\mathcal F)|(s-j+1)(k-j+1)\binom{n-s}{k-j+1}-
|g_{i}^{*}(\mathcal F)|\binom{n-s}{k-i-1}\binom{n-s-k+i+1}2\\
&&-\zeta_{i-1,s}(g^{*}_{i}(\mathcal F),\cal F|_{[s]}) \binom{n-s}{k-i}
-|g^{*}_{i}(\mathcal F)|(s-i) (k-i+1)\binom{n-s}{k-i}.
\end{eqnarray*}
\end{small}

Notice that $\binom{n-s}{k-i}\binom{n-s-k+i}2\ge \binom{n-s}{k-i-1}\binom{n-s-k+i+1}2$  and $\binom{n-s}{k-i+1} \ge \binom{n-s}{k-i}$ are both equivalent to $n\ge 2k+s-2i+1$,
which holds by Claim \ref{claim1}.
Therefore, the following three inequalities hold.
\begin{small}
\begin{eqnarray*}
\binom{n-s}{k-i}\binom{n-s-k+i}2 &\ge& \binom{n-s}{k-i-1}\binom{n-s-k+i+1}2,\\
 \binom{n-s}{k-i+1} &\ge&  \binom{n-s}{k-i},\\
(s-i+1)(k-i+1)\binom{n-s}{k-i+1} &\ge& (s-i) (k-i+1)\binom{n-s}{k-i}.
\end{eqnarray*}
\end{small}
Similarly,
\begin{small}
\begin{eqnarray*}
\binom{n-s}{k-j}\binom{n-s-k+j}2 &>& \binom{n-s}{k-j-1}\binom{n-s-k+j+1}2,\\
\binom{n-s}{k-j+1} &\ge& \binom{n-s}{k-j},\\
(s-j+1)(k-j+1)\binom{n-s}{k-j+1} &\ge& (s-j) (k-j+1)\binom{n-s}{k-j}.
\end{eqnarray*}
\end{small}
Hence $\zeta_{k-1}(\cal F_1)+\zeta_{k-1}(\cal F_2)-2\zeta_{k-1}(\cal F)>0$.
Combining $|\cal F_1|+|\cal F_2|-2|\cal F|>0$ in (\ref{ineq13}), we have
\[
{\rm co}_2(\cal F_1)+{\rm co}_2(\cal F_2)=(k|\cal F_1|+\zeta_{k-1}(\cal F_1))+(k|\cal F_1|+\zeta_{k-1}(\cal F_1))>2(k|\cal F|+\zeta_{k-1}(\cal F)))=2{\rm co}_2(\cal F).
\]
By pigeonhole principle, we have ${\rm co}_2(\cal F_1)>{\rm co}_2(\cal F)$ or ${\rm co}_2(\cal F_2)>{\rm co}_2(\cal F)$,
contradicting to the maximality of ${\rm co}_2(\cal F)$, i.e., ${\rm co}_2(\cal F_{\ell})\le {\rm co}_2(\cal F)$ for each $\ell=1,2$.

\medskip
{\bf Case II}. $i=j$; that is, $i=j=\frac{s+t}2$.

Firstly, we consider the case $i=t+1$. Then $\cal F=\{F\in \binom{[n]}{k}: |F\cap [t+2]|\ge t+1\}$, and we are done by Lemma \ref{lemmai=t+1}. So suppose that $i\ge t+2$.
For $p \in [s-1]$, denote $f_{p}=\{F\in g_{i}^{*}(\mathcal F)': p \notin F\}$.
Thus $\cup_{\ell\in [s-1]}f_{p}=g_{i}^{*}(\mathcal F)'$ and for every $F\in g_{i}^{*}(\mathcal F)'$, $F$ belongs to exactly $s-i$ $f_{p}$'s.
By Pigeonhole Principle, there exists $q\in [s-1]$ such that
\begin{eqnarray}\label{ineq31}
|f_q|\ge \frac{s-i}{s-1}|g_{i}^{*}(\mathcal F)'|=\frac{s-i}{s-1}|g_{i}^{*}(\mathcal F)|.
\end{eqnarray}
Set
\[\mathcal F_3=\mathcal F\cup \mathcal D(f_q)\setminus \mathcal D(g_{i}^{*}(\mathcal F)).\]	
Notice that $\mathcal F_3$ is $t$-intersecting.	
We consider the changes of codegree squared sum between $\cal F_{3}$ and $\cal F$.
We will show that $|\cal F_3|>|\cal F|$, and the number of pairs of sets that intersects with $k-1$ elements in
$\cal F_3$ is larger than $\cal F$'s.
Then combining Lemma \ref{transform}, we have ${\rm co}_2(\mathcal F_{3})>{\rm co}_2(\mathcal F)$, contradicting that ${\rm co}_2(\mathcal F)$ is maximum.
Thus the desired inequality follows.

We first consider the changes of the number of edges.
It is easy to see
\begin{eqnarray*}
|\cal F_3|-|\cal F|&=&|f_q|\binom{n-s+1}{k-i+1}-|g_{i}^{*}(\mathcal F)|\binom{n-s}{k-i}\\
&\ge & \frac{s-i}{s-1}|g_{i}^{*}(\mathcal F)|\binom{n-s+1}{k-i+1}-|g_{i}^{*}(\mathcal F)|\binom{n-s}{k-i}\\
&=& |g_{i}^{*}(\mathcal F)|\binom{n-s}{k-i}\left( \frac{(s-i)(n-s+1)}{(s-1)(k-i+1)} -1 \right)>0,
\end{eqnarray*}
where the last inequality follows from Lemma \ref{lemmai2}.

Now we consider the change of the number of  pairs of sets in $\mathcal F$ that intersects $k-1$ elements.
Similar to Case 1, the number of pairs of sets that intersects with $k-1$ elements in
$\cal F_3$ minus the number of such pairs of sets in $\cal F$ is at least
\begin{small}
\begin{eqnarray*}\label{ineqi=j}
&&|f_q|\binom{n-s}{k-i}\binom{n-s-k+i}2 +|f_q|(s-i+1)\binom{n-s}{k-i+1}(k-i+1)\\
&&-(|g_{i}^{*}|-|f_q|)(\binom{n-s}{k-i-1}\binom{n-s-k+i+1}2+(s-i)i\binom{n-s}{k-i}+(s-i)(k-i)\binom{n-s}{k-i})\\
&\ge& \frac{|f_q|}{i-t}\binom{n-s}{k-i}((n+t-i-k)(n+i-t-k+1)(i-t)-(n+t-i-k)(i-1)(k-i)-2(i-1)(i-t)k)\\
&>&0,
\end{eqnarray*}
\end{small}
where the first inequality follows from $|f_q|\ge \frac{s-i}{s-1}|g_{i}^{*}(\mathcal F)|$ in (\ref{ineq31}) and $s=2i-t$, and the last inequality follows from Lemma \ref{lemmai1}.
So ${\rm co}_2(\cal F_3)>{\rm co}_2(\cal F)$, contradicting to the maximality of ${\rm co}_2(\cal F)$.
This completes the proof of the inequality.

{\em Proof of the uniqueness.}
Suppose that ${\rm co}_2(\cal F)= {\binom{n-t}{k-t}}(t+(n-k+1)(k-t))$.
It follows that $|\cal F|=\binom{n-t}{k-t}$ by the proof above.
It is well known that, up to permutations, $\mathcal{F}=\cal G_1:=\{F\in {\binom{[n]}{k}}: [t]\subseteq F\}$ for $n\ge (t+1)(k-t+1)$, and one more possibility $\cal F=\cal G_2:=\{F\in \binom{[n]}{k}: |F\cap [t+2]|\ge t+1\}$ for $n = (t+1)(k-t+1)$ (Page 126, Ahlswede-Khachatrian Theorem \cite{AK}).
By Lemma \ref{lemmai=t+1}, ${\rm co}_2(\cal G_1)>{\rm co}_2(\cal G_2)$.
Hence, $\cal G_1$ is the unique extremal family up to permutations.

\section{Proof of Theorem \ref{maintheo3}}

Firstly,  we need the following lemma in the proof.

\begin{lemma}\label{norm3}
Let $t+1\le s \le k+1$ and $\cal F_s \subseteq {[n]\choose k}$ with $g(\cal F_s)=\{[t]\cup \{\ell\}: \ell \in [t+1,s]\} \cup \{[s] \setminus \{\ell\}: \ell \in [t]\}$.
Then for each $t+1\le s \le k$, we have
\begin{small}
\begin{eqnarray*}
\zeta_{k-1}(\cal F_{s+1})-\zeta_{k-1}(\cal F_s) = \binom{n-s-1}{k-t-2}\binom{n-s-k+t+1}2+(s-t)(k-t)\binom{n-s-1}{k-t-1}\\
-t\binom{n-s-1}{k-s}\binom{n-k-1}2-\binom{t}2\binom{n-s-1}{k-s+1}-2t(k-s+1)\binom{n-s-1}{k-s+1}.
\end{eqnarray*}
\end{small}
 \end{lemma}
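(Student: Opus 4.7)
The plan is to prove this by a direct combinatorial accounting of which pairs with $|F_1\cap F_2|=k-1$ are gained or lost in passing from $\cal F_s$ to $\cal F_{s+1}$. First, unpacking the generating set, I would verify that $F\in \cal F_s$ iff either $[t]\subseteq F$ with $F\cap[t+1,s]\neq\emptyset$, or $[t+1,s]\subseteq F$ with $|F\cap[t]|\ge t-1$, and similarly for $\cal F_{s+1}$ with $s$ replaced by $s+1$. Comparing these conditions yields the descriptions
\[
A := \cal F_{s+1}\setminus \cal F_s = \{[t]\cup\{s+1\}\cup G : G\subseteq [s+2,n],\ |G|=k-t-1\},
\]
\[
R := \cal F_s\setminus \cal F_{s+1} = \{([t]\setminus\{\ell\})\cup [t+1,s]\cup G : \ell\in[t],\ G\subseteq [s+2,n],\ |G|=k-s+1\}.
\]

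Writing $C = \cal F_s\cap \cal F_{s+1}$, each unordered pair with $|F_1\cap F_2|=k-1$ falls into exactly one of six classes $\{CC,AA,RR,AC,RC,AR\}$. The $CC$ class cancels in the difference, and the $AR$ class contributes nothing since such a pair lies in neither $\cal F_s$ nor $\cal F_{s+1}$; hence
\[
\zeta_{k-1}(\cal F_{s+1})-\zeta_{k-1}(\cal F_s) = P_{AA} + P_{AC} - P_{RR} - P_{RC},
\]
where $P_{AA}$ and $P_{RR}$ count pairs internal to $A$ and to $R$, while $P_{AC}$ and $P_{RC}$ count cross pairs with $C$.

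Each of these four quantities is then computed directly. For $P_{AA}$ two members of $A$ must share $[t]\cup\{s+1\}$ and differ in their $G$-parts by one element each; choosing the common $G$-intersection of size $k-t-2$ from $[s+2,n]$ and then two distinct extra elements from the remaining $n-s-k+t+1$ slots produces the first term of the claimed formula. For $P_{RR}$ I split by whether the two choices $\ell_1,\ell_2\in[t]$ coincide: when $\ell_1=\ell_2$ the $G$-parts differ in one element, giving $t\binom{n-s-1}{k-s}\binom{n-k-1}{2}$; when $\ell_1\neq\ell_2$ the $G$-parts must coincide, giving $\binom{t}{2}\binom{n-s-1}{k-s+1}$. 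For $P_{AC}$ and $P_{RC}$ I enumerate the single-element swaps $F_1\mapsto F_2$, classified by which of the four blocks $[t]$, $[t+1,s]$, $\{s+1\}$, $[s+2,n]$ contains the removed and the added element; most swap types either map $A$ back into itself, $R$ back into itself, or exit $\cal F_s\cup\cal F_{s+1}$ altogether. The surviving types give $(s-t)(k-t)\binom{n-s-1}{k-t-1}$ on the $A$-side (remove $s+1$ or an element of $G$, add something in $[t+1,s]$) and $2t(k-s+1)\binom{n-s-1}{k-s+1}$ on the $R$-side (remove an element of $G$ and add either the missing $\ell$ or the new element $s+1$).

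The main obstacle is the bookkeeping in the swap analysis: there is essentially a $4\times 4$ grid of swap types for each of $A$ and $R$, and the delicate point is to verify, case by case, the membership of the resulting $F_2$ in $C$ against the block-size decomposition $(|F_2\cap[t]|,|F_2\cap[t+1,s]|,|F_2\cap\{s+1\}|,|F_2\cap[s+2,n]|)$, using the explicit characterization of $\cal F_s$ and $\cal F_{s+1}$ established in the first step. Once the surviving contributions are collected, the identity in the statement follows by straightforward summation.
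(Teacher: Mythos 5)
Your plan follows essentially the same route as the paper: decompose $\cal F_{s+1}\setminus\cal F_s$ and $\cal F_s\setminus\cal F_{s+1}$ explicitly as $A$ and $R$, split the difference into $P_{AA}+P_{AC}-P_{RR}-P_{RC}$, and count each class by single-element swaps. Your descriptions of $A$ and $R$ are correct, and your counts for $P_{AA}$, $P_{RR}$ and $P_{AC}$ agree with the paper's cases (i)--(v).

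There is, however, a concrete gap in the $P_{RC}$ enumeration, and interestingly it is the same omission that occurs in the paper's case (v). You claim the only surviving swaps on the $R$-side are ``remove an element of $G$ and add either the missing $\ell$ or $s+1$,'' giving $2t(k-s+1)\binom{n-s-1}{k-s+1}$. But consider $F_1=([t]\setminus\{\ell\})\cup[t+1,s]\cup G\in R$ and the swap that removes $c\in[t+1,s]$ and adds $\ell$. The resulting set $F_2=[t]\cup([t+1,s]\setminus\{c\})\cup G$ satisfies $[t]\subseteq F_2$ and $F_2\cap[t+1,s]\ne\emptyset$ (for $s\ge t+2$), hence $F_2\in\cal F_s\cap\cal F_{s+1}=C$, not in $R$ and not outside $\cal F_s\cup\cal F_{s+1}$. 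This contributes an additional $t(s-t)\binom{n-s-1}{k-s+1}$ pairs to $P_{RC}$ that your tally (and the paper's: case (v) counts only the $\binom{t}{2}$ sub-choices with the second missing element in $[t]$, not in $[t+1,s]$) does not include. A small sanity check with $t=2$, $k=4$, $s=4$, $n=8$ shows the stated formula evaluates to $-6$, while direct enumeration of $P_{AA}+P_{AC}-P_{RR}-P_{RC}=3+12-9-24$ gives $-18$, confirming the missing $-t(s-t)\binom{n-s-1}{k-s+1}=-12$ term. So the identity you (and the paper) arrive at is not correct as an equality; if you carry out the promised $4\times4$ swap bookkeeping carefully you will recover the extra term.
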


 \begin{proof}
  We analyze the cases that affect the difference  $\zeta_{k-1}(\cal F_{s+1})-\zeta_{k-1}(\cal F_s)$ by considering pairs $\{F_1,F_2\}$ with $|\mathcal F_1\cap \mathcal F_2|=k-1$.

We first consider the cases where the number of such pairs increases. We assume that $F_1 \in ([t]\cup \{s+1\}) \times \binom{[s+2,n]}{k-t-1}$ and $F_2\in \cal F_{s+1}$ with $|F_1\cap F_2|=k-1$.

(i) $|F_2\cap [s+1]|=t+1$ and $F_1\cap [s+1]=F_2\cap [s+1]$. Here,  $|F_1\cap F_2\cap [s+2,n]|=k-t-2$.  The number of such pairs is $\binom{n-s-1}{k-t-2}\binom{n-s-k+t+1}2$.

(ii) $|F_2\cap [s+1]|=t+1$ and $F_1\cap [s+1]\neq F_2\cap [s+1]$.
Here, $|F_1 \cap F_2\cap [s+1]|=t$ and $F_1\cap [s+2,n]=F_2\cap [s+2,n]$. The number of such pairs is $(s-t)\binom{n-s-1}{k-t-1}$.

(iii) $|F_2\cap [s+1]|=t+2$.
Then $F_1\cap [s+1]$ is a proper subset of $F_2\cap [s+1]$.
The number of such pairs is $(s-t)\binom{n-s-1}{k-t-1}(k-t-1)$.
Thus, the total number of pairs $\{F_1, F_2\}$ that increase the difference is
\begin{eqnarray}\label{s1}
\binom{n-s-1}{k-t-2}\binom{n-s-k+t+1}2+(s-t)(k-t)\binom{n-s-1}{k-t-1}.
\end{eqnarray}

We now consider the cases where the number of such pairs decreases. We assume  $F_1 \in D(\{[s] \setminus \{\ell\}: \ell \in [t]\}) \setminus D(\{[s+1] \setminus \{\ell\}: \ell \in [t]\})$ and $F_2\in \cal F_{s}$ with $|F_1\cap F_2|=k-1$.

(iv) $|F_2\cap [s+1]|=s-1$ and $F_1\cap [s+1]=F_2\cap [s+1]$.
Here, $|(F_1\cap [s+2,n]) \cap (F_2\cap [s+2,n])|=k-s$. The number of such pairs is $t\binom{n-s-1}{k-s}\binom{n-k-1}2$.

(v) $|F_2\cap [s+1]|=s-1$ and $F_1\cap [s+1] \neq F_2\cap [s+1]$.
Here, $F_1\cap [s+2,n] = F_2\cap [s+2,n]$. The number of such pairs is $\binom{t}2\binom{n-s-1}{k-s+1}$.

(vi) $|F_2\cap [s+1]|=|F_1\cap [s+1]|+1$.
Here,  $F_1\cap [s+1]$ is a proper subset of $F_2\cap [s+1]$.
The number of such pairs is $2t(k-s+1)\binom{n-s-1}{k-s+1}$.
So the total number of pairs that decrease the difference is
\begin{eqnarray}\label{s2}
t\binom{n-s-1}{k-s}\binom{n-k-1}2+\binom{t}2\binom{n-s-1}{k-s+1}+2t(k-s+1)\binom{n-s-1}{k-s+1}.
\end{eqnarray}

By subtracting the number of pairs in (\ref{s2}) from the number of pairs in (\ref{s1}), we obtain the desired inequality.
 \end{proof}
\qed

Now, we are ready to prove Theorem \ref{maintheo3}.

\medskip
{\bf Proof of Theorem \ref{maintheo3}}.
Let $t,k,n$ be positive integers such that $2\leq t\leq k\leq n$ and $n\geq(t+1)(k-t+1)$. Suppose $\mathcal{F}\subseteq\binom{[n]}{k}$ is a non-trivial $t$-intersecting family with maximal codegree squared sum. 
As new sets are added to  $\cal F$, the codegree squared sum of the resulting family will not decrease. 
Hence, $\cal F$ is maximal.

{\em Proof of the inequality of ${\rm co}_2(\cal F)\le \max\{{\rm co}_2(\cal H(n, k, t)),{\rm co}_2(\cal A(n, k, t))\}$.}
By Corollary \ref{compressed1}, we may assume that $\mathcal{F}$ is left-compressed.
Let \(g(\mathcal{F})\in G_*(\mathcal{F})\) be a generating set of \(\mathcal{F}\) such that \(s=s^+(g(\mathcal{F}))\) is minimum. Let \(i\) be the smallest integer such that \(g^*_i(\mathcal{A})\neq\emptyset\).
Clearly, $i\ge t+1$ and  $s\ge t+2$.

At first, we assume that $i\neq \frac{s+t}{2}$.  According to the proof of Theorem \ref{maintheo1},  either \(\mathcal{F}_1=\mathcal{F}\cup\mathcal{D}(g^{*}_{i}(\mathcal{F}'))\setminus\mathcal{D}(g^{*}_{s+t-i}(\mathcal{B}))\) or \(\mathcal{F}_2=\mathcal{F}\cup\mathcal{D}(g^{*}_{s+t-i}(\mathcal{F}'))\setminus\mathcal{D}(g^{*}_{i}(\mathcal{F}))\) has a larger codegree squared sum. Thus, at least one of \(\mathcal{F}_1\) and \(\mathcal{F}_2\) must be a trivial family. Without loss of generality, assume \(\mathcal{F}_1\) is trivial. Consequently, each element of \(\mathcal{F}_1\) contains $[u]$ for some \(u\ge t\).

Suppose \(u\geq t+1\). Then \(\text{co}_2(\mathcal{F}_1)<\text{co}_2(\mathcal{A}(n,k,t))\) since \(\mathcal{F}_1\) is a proper subfamily of \(\mathcal{A}(n,k,t)\) up to isomorphism, contradicting the maximality of \(\text{co}_2(\mathcal{F})\). Therefore, \(u=t\)   and \(\{[t]\cup\{\ell\}:\ell\in[t+1,s]\}\subseteq g(\mathcal{F})\).

On the one hand, since \(g(\mathcal{F})\) is minimal (in the sense of set-theoretical inclusion), there are no other sets in \(g(\mathcal{F})\setminus\{[t]\cup\{\ell\}:\ell\in[t+1,s]\}\) containing \([t]\). On the other hand, since \(g(\mathcal{F})\) is \(t\)-intersecting, \(|F\cap[t]|=t-1\) and \([t+1,s]\subseteq F\) for each \(F\in g(\mathcal{F})\setminus\{[t]\cup\{\ell\}:\ell\in[t+1,s]\}\). By the maximality of \(\text{co}_2(\mathcal{F})\), we obtain
$g(\mathcal{F})\setminus\{[t]\cup\{\ell\}:\ell\in[t+1,s]\}=\{[s]\setminus\{\ell\}:\ell\in[t]\}$,
yielding
\[g(\mathcal{F})=\{[t]\cup\{\ell\}:\ell\in[t+1,s]\}\cup\{[s]\setminus\{\ell\}:\ell\in[t]\}\]
for some $s \in [t+2,k+1]$.
Let $\cal F_s \subseteq {[n]\choose k}$ such that $g(\cal F_s)=\{[t]\cup \{\ell\}: \ell \in [t+1,s]\} \cup \{[s] \setminus \{\ell\}: \ell \in [t]\}$.
By Lemma \ref{norm3}, for each \(s\in[t+2,k]\), we have
\[
\begin{aligned}
\zeta_{k-1}(\mathcal{F}_{s+1})&-\zeta_{k-1}(\mathcal{F}_s)=\binom{n-s-1}{k-t-2}\binom{n-s-k+t+1}{2}+(s-t)(k-t)\binom{n-s-1}{k-t-1}\\
&-t\binom{n-s-1}{k-s}\binom{n-k-1}{2}-\binom{t}{2}\binom{n-s-1}{k-s+1}-2t(k-s+1)\binom{n-s-1}{k-s+1}.
\end{aligned}
\]
It suffices to prove that $\zeta_{k-1}(\cal F_{s-1})\ge \zeta_{k-1}(\cal F_s)$ or $\zeta_{k-1}(\cal F_{s+1})\ge \zeta_{k-1}(\cal F_s)$ for each $s\in [t+3,k]$.
For the contrary, we suppose that there exists some $s\in [t+3,k]$ such that $\zeta_{k-1}(\cal F_{s-1})< \zeta_{k-1}(\cal F_s)$ and $\zeta_{k-1}(\cal F_{s+1})< \zeta_{k-1}(\cal F_s)$.
As $\zeta_{k-1}(\cal F_{s-1})< \zeta_{k-1}(\cal F_s)$, we have
\begin{eqnarray*}
 &&\binom{n-s-1}{k-t-2}\binom{n-s-k+t+1}2+(s-t)(k-t)\binom{n-s-1}{k-t-1}\\&&<
t\binom{n-s-1}{k-s}\binom{n-k-1}2+\binom{t}2\binom{n-s-1}{k-s+1}+2t(k-s+1)\binom{n-s-1}{k-s+1}.
\end{eqnarray*}
Similarly, as $\zeta_{k-1}(\cal F_{s-1})< \zeta_{k-1}(\cal F_s)$, we have
\begin{eqnarray*}
 &&t\binom{n-s}{k-s+1}\binom{n-k-1}2+\binom{t}2\binom{n-s}{k-s+2}+2t(k-s+2)\binom{n-s}{k-s+2}\\&&<
\binom{n-s}{k-t-2}\binom{n-s-k+t+2}2+(s-1-t)(k-t)\binom{n-s}{k-t-1}.
\end{eqnarray*}
Combining the above two inequalities, we obtain
\begin{eqnarray*}
 \frac{\binom{n-s-1}{k-t-2}\binom{n-s-k+t+1}2+(s-t)(k-t)\binom{n-s-1}{k-t-1}}{\binom{n-s}{k-t-2}\binom{n-s-k+t+2}2+(s-1-t)(k-t)\binom{n-s}{k-t-1}}<
\frac{t\binom{n-s-1}{k-s}\binom{n-k-1}2+\binom{t}2\binom{n-s-1}{k-s+1}+2t(k-s+1)\binom{n-s-1}{k-s+1}}{t\binom{n-s}{k-s+1}\binom{n-k-1}2+\binom{t}2\binom{n-s}{k-s+2}+2t(k-s+2)\binom{n-s}{k-s+2}}.\end{eqnarray*}
On the one hand, as
\begin{eqnarray*}
\frac{\binom{n-s-1}{k-t-2}\binom{n-s-k+t+1}2}{\binom{n-s}{k-t-2}\binom{n-s-k+t+2}2}=\frac{n-s-k+t}{n-s}<\frac{(s-t)(n-s-k+t+1)}{(s-1-t)(n-s)}=\frac{(s-t)(k-t)\binom{n-s-1}{k-t-1}}{(s-1-t)(k-t)\binom{n-s}{k-t-1}},
\end{eqnarray*}
we have
\begin{eqnarray*}
 \frac{\binom{n-s-1}{k-t-2}\binom{n-s-k+t+1}2+(s-t)(k-t)\binom{n-s-1}{k-t-1}}{\binom{n-s}{k-t-2}\binom{n-s-k+t+2}2+(s-1-t)(k-t)\binom{n-s}{k-t-1}}
 >\frac{n-s-k+t}{n-s}.
 \end{eqnarray*}
On the other hand,
\begin{eqnarray*}
\frac{t\binom{n-s-1}{k-s}\binom{n-k-1}2}{t\binom{n-s}{k-s+1}\binom{n-k-1}2}=\frac{2t(k-s+1)\binom{n-s-1}{k-s+1}}{2t(k-s+2)\binom{n-s}{k-s+2}}=\frac{k-s+1}{n-s}<\frac{k-s+2}{n-s}=\frac{\binom{t}2\binom{n-s-1}{k-s+1}}{\binom{t}2\binom{n-s}{k-s+2}},
\end{eqnarray*}
we have
\begin{eqnarray*}
\frac{t\binom{n-s-1}{k-s}\binom{n-k-1}2+\binom{t}2\binom{n-s-1}{k-s+1}+2t(k-s+1)\binom{n-s-1}{k-s+1}}{t\binom{n-s}{k-s+1}\binom{n-k-1}2+\binom{t}2\binom{n-s}{k-s+2}+2t(k-s+2)\binom{n-s}{k-s+2}}
< \frac{k-s+2}{n-s}.
 \end{eqnarray*}
Therefore,
\begin{eqnarray*}
\frac{n-s-k+t}{n-s}<\frac{k-s+2}{n-s}, \text{ i.e.,} \: n<2k-t+2.
\end{eqnarray*}
However, since $n\ge (t+1)(k-t+1)$, $n-(2k-t+2)\ge f(k,t):=(t+1)(k-t+1)-(2k-t+2)$.
As $f(k,t)$ is concave on $t$ and $2\le t \le k-1$, $f(k,t) \ge \min \{f(k,2),f(k,k-1)\}=k-3>0$, a contradiction.

At last, we assume that $i=\frac{s+t}{2}$. It suffices to prove that $s=t+2$, since \(\mathcal{F}=\mathcal A(n,k,t)\) in this case clearly.
We claim that $s=t+2$. For the contrary suppose that $s>t+2$. Since $i=\frac{s+t}{2}$ is an integer, we know that $s\ge t+4$. 
According to the proof of Theorem \ref{maintheo1},  $\mathcal F_3=\mathcal F\cup\mathscr D(f_q)\backslash \mathscr D(g^{*}_{i}(\mathcal F))$ has a larger codegree squared sum  for some $q\in [s-1]$, where  $f_{q}=\{F\in g_{i}^{*}(\mathcal F)': q \notin F\}$. Thus, \(\mathcal{F}_3\) must be a trivial family by the assumption.  
Let $G,F\in g_{i}^{*}(\mathcal F)'$ such that $G\cup F=[s-1]$ and $|G\cap F|=t-1$.  
Since $s\ge t+4$, $|[s-1]\setminus G|\ge 2$ and $|[s-1]\setminus F|\ge 2$.
Let $a,b\in [s-1]\setminus G$ and $c,d \in [s-1]\setminus G$ with $a\neq b$ and $c\neq d$.
By term (ii) and (iii) of Lemma \ref{gtf}, we have $ \mathscr D(E) \subset \mathcal F$ for all $E\in \{G\cup \{a\}, G\cup \{b\}, F\cup \{c\}, F\cup \{d\}\}$.
However, it is easy to see that  $\cap_{H\in \mathcal F_3}H\subset (G\cup \{a\})\cap( G\cup \{b\})\cap( F\cup \{c\})\cap (F\cup \{d\})=G\cap F$, contradicting that $\mathcal F_3$ is trivial.

{\em Proof of the uniqueness.}
Suppose that ${\rm co}_2(\cal F)=\max\{{\rm co}_2(\cal H(n, k, t)),{\rm co}_2(\cal A(n, k, t))\}$.
It follows that $|\cal F|=\max\{|\cal H(n, k, t)|,|\cal A(n, k, t)|\}$ by the proof above.
It is well known that, up to permutations, $\mathcal{F}=\cal H(n, k, t)$ or $\cal F=\cal H(n, k, t)$ (Page 124, Ahlswede-Khachatrian Theorem \cite{AK1996}).
\qed

\section{Concluding remarks}
In the present paper,  we prove an Erd\H{o}s--Ko--Rado Theorem in $\ell_2$-norm.
As a by-product, we prove a Frankl--Hilton--Milner Theorem for $t\ge 2$.
It is interesting to consider the case of $t=1$, which was conjectured by Brooks and  Linz \cite{BL}.

\begin{con}[Brooks and  Linz, \cite{BL}]\label{BLc}
Let $ \mathcal F \subseteq \binom{[n]}{k} $ be a non-trivial intersecting family.
Then, for $k\ge 3$ and $n>2k$,
\[{\rm co}_2(\cal F)\le {\rm co}_2(\cal H(n,k,1)),\]
with equality if and only if $\mathcal{F} \cong H(n,k,1)$ if $k\ge 4$,
and if and only if $\mathcal{F} \cong H(n,k,1)$ or $\mathcal{F} \cong A(n,k,1)$ if $k=3$.
\end{con}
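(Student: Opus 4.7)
The plan is to run the same program as the proof of Theorem \ref{maintheo3}, now in the regime $t=1$. First we need the $t=1$ analog of Corollary \ref{compressed1}: some left-compressed, non-trivial intersecting family $\cal F\subseteq\binom{[n]}{k}$ attains the maximum codegree squared sum. This is a standard Hilton--Milner--style reduction: whenever a single shift would turn $\cal F$ from non-trivial to trivial, the resulting star has strictly smaller ${\rm co}_2$ than $\cal H(n,k,1)$ by a quick computation using Theorem \ref{maintheo1}, so such shifts may be avoided without loss.

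Given $\cal F$ left-compressed, take $g(\cal F)\in G_*(\cal F)$, set $s=s^+(g(\cal F))$, and let $i$ be the smallest integer with $g_i^*(\cal F)\ne\emptyset$; non-triviality forces $i\ge 2$. The pushing dichotomy in the proof of Theorem \ref{maintheo1} shows that at least one of the pushes $\cal F_1$, $\cal F_2$ (Case I), or $\cal F_3$ (Case II) strictly increases ${\rm co}_2$. Since $\cal F$ is extremal among non-trivial families, this pushed family, say $\cal F_1$, must itself be trivial --- all its elements contain a common set $U$ with $|U|=i-1$. If $i\ge 3$, then $|U|\ge 2$, and after relabeling $\cal F_1\subseteq\mathrm{star}(U)\subsetneq\cal A(n,k,1)$, so ${\rm co}_2(\cal F)<{\rm co}_2(\cal F_1)<{\rm co}_2(\cal A(n,k,1))\le{\rm co}_2(\cal H(n,k,1))$, contradicting maximality. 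Hence $i=2$, and arguing exactly as at the end of the proof of Theorem \ref{maintheo3} (minimality of $g(\cal F)$ together with the intersecting condition) forces
\[
g(\cal F)=\{\{1,\ell\}:\ell\in[2,s]\}\cup\{[2,s]\}
\]
for some $s\in[3,k+1]$. Denote this family by $\cal F_s$; observe $\cal F_3=\cal A(n,k,1)$ and $\cal F_{k+1}=\cal H(n,k,1)$.

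It remains to compare ${\rm co}_2(\cal F_s)$ over $s\in[3,k+1]$. Applying Lemma \ref{norm3} with $t=1$ yields
\[
\zeta_{k-1}(\cal F_{s+1})-\zeta_{k-1}(\cal F_s)=\binom{n-s-1}{k-3}\binom{n-s-k+2}{2}+(s-1)(k-1)\binom{n-s-1}{k-2}-\binom{n-s-1}{k-s}\binom{n-k-1}{2}-2(k-s+1)\binom{n-s-1}{k-s+1},
\]
and the elementary formula $|\cal F_s|=\binom{n-1}{k-1}-\binom{n-s}{k-1}+\binom{n-s}{k-s+1}$ gives $|\cal F_{s+1}|-|\cal F_s|$; Lemma \ref{transform} then converts these into an explicit expression for ${\rm co}_2(\cal F_{s+1})-{\rm co}_2(\cal F_s)$. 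Mimicking the ratio comparison at the end of the proof of Theorem \ref{maintheo3}, one would show that $s\mapsto{\rm co}_2(\cal F_s)$ is strictly monotone on $[3,k+1]$ for $k\ge 4$ --- with maximum at $s=k+1$, i.e.\ $\cal H(n,k,1)$ --- and a direct computation handles the only remaining case $k=3$, $s\in\{3,4\}$, confirming the tie between $\cal A(n,3,1)$ and $\cal H(n,3,1)$.

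\textbf{Main obstacle.} The hardest step is the final optimization. For $t\ge 2$ the ratio argument of Theorem \ref{maintheo3} collapses to $n<2k-t+2$, which is ruled out cleanly by $n\ge(t+1)(k-t+1)$; for $t=1$ the same argument yields $n<2k+1$, precisely the boundary of the hypothesis $n>2k$. The generic concavity/ratio trick thus degenerates at $n=2k+1$, and one must either sharpen the comparison term-by-term at that boundary, or peel off the endpoint $s=3$ separately (via a Lemma \ref{lemmai=t+1}-style direct comparison with $\cal A(n,k,1)$) and establish strict monotonicity only on $[4,k+1]$, where the ratio argument carries additional slack. A secondary subtlety is the reduction in Step 1, since for $t=1$ left-compression can destroy non-triviality and must be applied with care.
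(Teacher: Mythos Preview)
The statement you are attempting is \emph{not proved} in the paper: it is listed as a conjecture, and the authors only remark that ``using the same method in this paper, we can confirm Conjecture~\ref{BLc} for $n\ge 2.5k$,'' with the proof omitted. So there is no paper proof to compare against for the full range $n>2k$; your plan is essentially the natural $t=1$ adaptation of the method behind Theorem~\ref{maintheo3}, which is exactly what the authors have in mind for the weaker range.

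You correctly flag the final optimization over $s\in[3,k+1]$ as delicate at $n=2k+1$, and the left-compression reduction (the $t=1$ analogue of Corollary~\ref{compressed1}) as a secondary issue. But there is an earlier gap you do not name. Your step ``the pushing dichotomy in the proof of Theorem~\ref{maintheo1} shows that at least one of $\cal F_1,\cal F_2,\cal F_3$ strictly increases ${\rm co}_2$'' is not available for $t=1$ in the needed range. In Case~I of that proof, the key pairs of inequalities $\binom{n-s}{k-i+1}\ge\binom{n-s}{k-i}$ and its $j$-analogue are established via Lemma~\ref{claim1}, whose hypothesis is $t\ge 2$; for $t=1$ the first inequality is equivalent to $n\ge 2k+(j-i)$, which is strictly stronger than $n>2k$ whenever $j-i\ge 2$. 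In particular, for the smallest generator size $i=2$ one has $j=s-1$, and the requirement becomes $n\ge 2k+s-3$, which fails for $s$ close to $k+1$ throughout the range $2k<n<3k-2$. Without this, the averaging argument that produces a strictly better $\cal F_\ell$ breaks down, and your deduction that ``the pushed family must be trivial, hence $i=2$'' is not justified. This intermediate obstruction---not only the endpoint comparison of the $\cal F_s$---is presumably why the authors commit only to $n\ge 2.5k$ rather than to the full conjectured range.
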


Using the methods presented in this paper, we can verify that the conjecture holds for \( n \geq 3k \). We omit the proof here.
However, fully resolving the conjecture appears to be a highly challenging problem.

\section*{Acknowledgements}
The authors thank Xizhi Liu for bringing us the references \cite{BCL,BL}, which help us get started on this topic.
Additionally,  the authors thank the anonymous reviewers for their detailed and constructive comments which greatly improved the presentation of this paper.
The first author is supported by the Scientific Research Fund of Hunan Provincial Education Department (No. 25A0080).
The second author is supported by the National Natural Science Foundation of China  (No.12371332 and
No.11971439).

\end{document}